\newtheorem{thm}{Theorem}[section]
\newtheorem{lem}{Lemma}[section]
\theoremstyle{definition}
\newtheorem{defn}{Definition}[section]
\newtheorem{asm}{Assumption}[section]
\newtheorem{rem}{Remark}[section]
\numberwithin{equation}{section}
\numberwithin{equation}{section}
\newcounter{saveeqn}
\def\nm{\noalign{\medskip}}
\renewcommand{\(}{\left(}
\renewcommand{\)}{\right)}
\newcommand{\Gl}{\lambda}
\newcommand{\Gs}{\sigma}
\newcommand{\vp}{\phi}
\newcommand{\Kcal}{\mathcal{K}}
\newcommand{\Scal}{\mathcal{S}}
\newcommand{\Dcal}{\mathcal{D}}
\newcommand{\Ocal}{\mathcal{O}}
\newcommand{\ds}{\displaystyle}
\newcommand{\pd}[2]{\frac {\p #1}{\p #2}}
\newcommand{\RR}{\mathbb{R}}
\newcommand{\p}{\partial}
\DeclareMathAlphabet{\itbf}{OML}{cmm}{b}{it}
\title[Inverse conductivity problem in layered structures]{Inverse conductivity problem with one measurement: Uniqueness of multi-layer structures}
\author{Lingzheng Kong}
\address{School of Mathematics and Statistics, Central South University, Changsha 410083, China}
\email{math\_klz@csu.edu.cn, math\_klz@163.com}
\thanks{
$^*$ Corresponding author: youjundeng@csu.edu.cn, dengyijun\_001@163.com
}
\author{Youjun Deng$^*$}
\address{School of Mathematics and Statistics, Central South University, Changsha 410083, China}
\email{youjundeng@csu.edu.cn, dengyijun\_001@163.com}
\author{Liyan Zhu}
\address{School of Mathematics and Statistics, Central South University, Changsha 410083, China}
\email{math\_zly@csu.edu.cn}
\date{} % Activate to display a given date or no date (if empty),
\begin{document}
\maketitle

\begin{abstract}
In this paper, we study the recovery of multi-layer structures in inverse conductivity problem by using one measurement. First, we define the concept of Generalized Polarization Tensors (GPTs) for multi-layered medium and show some important properties of the proposed GPTs. With the help of GPTs, we present the perturbation formula for general multi-layered medium. Then we derive the perturbed electric potential for multi-layer concentric disks structure in terms of the so-called \emph{generalized polarization matrix}, whose dimension is the same as the number of the layers. By delicate analysis, we derive an algebraic identity involving the geometric and material configurations of multi-layer concentric disks. This enables us to reconstruct the multi-layer structures by using only one \emph{partial-order} measurement.

\noindent{\bf Keywords:}~~ inverse conductivity problem, multi-layer structure, one measurement, uniqueness

\noindent{\bf 2020 Mathematics Subject Classification:}~~ 31A25, 35J05, 86A20
%	31A25  	Boundary value and inverse problems for harmonic functions in two dimensions
%	35J05  	Laplace operator, Helmholtz equation (reduced wave equation), Poisson equation
%	86A20  	Potentials, prospecting
\end{abstract}

\maketitle

\section{Introduction}
Consider the conductivity problem
\begin{equation}\label{eq:mainmd02}
\left\{
\begin{array}{ll}
\nabla\cdot ((\sigma\chi (A) +\chi(A_0))\nabla u) =0, & \mbox{in} \quad \RR^d,\\
u-H=\Ocal(|x|^{1-d}), & \mbox{as}\quad |x|\rightarrow \infty,
\end{array}
\right.
\end{equation}
where $d=2, 3$ and $A$ is the inclusion embedded in $\RR^d$  with a $C^{1,\eta}\, (0<\eta<1)$ smooth boundary $\p A$, $A_0=\RR^d\setminus\overline{A}$ is the background space,
$\chi$ denotes the characteristic function.
The medium parameter is characterised by the
conductivity which is normalised to be 1 in $A_0$ and is assumed to be $\sigma\in \RR^+$ and $\sigma \neq 1$ in $A$.
The background  electrical potential ${H}$ is a harmonic function in $\RR^d$, and $u$ represents the total electric potential.
In practical applications, the conductivity $\sigma$ might not be homogeneous and usually the inclusion can be modeled as a multi-layer structure. The multi-layer structure, that is a nested body consisting of piecewise homogeneous layers, occurs in many cutting-edge applications such as medical imaging, remote sensing, geophysics, pavement design and invisibility cloaking \cite{AKLL11,AKLL13,AKLLY13,BLip2020,DKLZArxiv,DLLarma2019,DLLarma2020,YMY}.

The inverse conductivity problem can be defined as finding the inclusion $A$ and its conductivity $\sigma$ from given $H$ and boundary measurement.
By using infinitely many measurements or from the Newmann-to-Dirichlet map, the unique recovery results were obtained in \cite{ADKL14,APLcpde2005,BUcpde1997,KVcpam1985,AINam1996,SUam1987}.
While if only finitely many measurements are available, the unique recovery is related to the shape of the inclusion, and the global uniqueness was  obtained only for convex polyhedrons and balls in
$\RR^3$ and for polygons and disks in $\RR^2$, we  refer to \cite{BFSpams1994,FIiumj1989,IPip1990,KSsiam1990,JKSjfaa1996,DLbook2023}. We also refer to \cite{DLLarma2019,DLLarma2020,DLUjde2019,LZip2006,ARpams2005,HK04book,BLip2020,CSijam1983,LLSSsiam2013} for uniqueness results in optics and acoustics.
In this paper, we consider the uniqueness recovery for the inclusion of  multi-layer types, and we only need to use one measurement to locate the inclusion and reconstruct its conductivity distribution.
Such multi-layer structures have been proposed for achieving the so-called GPTs vanishing structures and hence cloaking devices with enhanced invisibility effects via the transformation approach; see \cite{AAHWY17,LTWW21,AKLL11,AKLLY13,AKLL13}, and for achieving surface localized resonance structures by allowing the presence of negative materials, see \cite{DLbook2024,DKLZArxiv,DFArxiv,FDL15}.

In previous works on inverse conductivity problem with one measurement, the main focus is on how to recover the shape of the inclusion by a given constant conductivity $\sigma$. This can be regarded as a one-layer structure.
So far, only a few special types of inclusion, such as disk and ball, polyhedral and polygon, have been proved to be reconstructed by using one measurement.
In the present paper, instead of considering the recovery of the shape, we consider the recovery of the conductivity distribution. Particularly in \cite{FDipi2018}, the authors studied the recovery of conductivity with the
number of layers being 1 or 2.
Motivated by the above works, we consider the recovery of the conductivity distribution within much more general layered structures.
The number of layers can be arbitrary and the material parameters in each layer may be different, though uniform.
The multi-layer structure can be regarded as a special case of general inhomogeneous inclusions. In practical applications,  wave measurement devices are usually deployed far away from the target. Based on this, we shall make use the asymptotic analysis, transmission condition and unique continuation theorem to first locate the multi-layer structure of general shape by using one measurement. We then consider the uniqueness recovery of structure together with the conductivity for multi-layer concentric disks by using one \emph{partial-order} measurement (see Definition \ref{th:expan2}) on some given surfaces.
We derive the perturbed electric potential outside the multi-layer concentric disks in terms of the so-called \emph{generalized polarization matrix} (see (\ref{eq:matP01})), whose dimension is the same as the number of the layers. By delicate analysis, we derive an algebraic identity involving the conductivity. Then by inverting those algebraic  identities using algebraic analysis techniques, we obtain the desired unique recovery results.

The rest of the paper is organized as follows. In Section \ref{sec22}, we introduce the layer potential technique. In Section \ref{sec:03}, we are devoted to defining the Generalized Polarization tensors for multi-layered medium and show some important properties of such GPTs.
In Section \ref{sec2},
we first establish the integral representation of the solution to the conductivity transmission problem within multi-layer structures by using the layer potential techniques. Then we derive the asymptotic expansion of the perturbed electric potential and locate the multi-layer structure by using the first-order polarization tensor.
Section \ref{sec4} is devoted to  reconstructing the conductivity value for multi-layer concentric disks by virtue of \emph{generalized polarization matrix}. Section \ref{s5} contains some conclusion remarks.

\section{Layer potential technique}\label{sec22}
In this section, we shall introduce the layer potentials for Laplacian and prove a decomposition formula of the solution to the conductivity transmission problem (\ref{eq:mainmd02}).
Let $\Gamma_1:=\p A$ and
let the interior of $A$ be divided by means of closed and nonintersecting  $C^{1,\eta}$ surfaces $\Gamma_k$ $(k = 2, 3,...,N)$  into subsets (layers)  $A_k$ $(k = 1, 2,...,N)$.  Each $\Gamma_{k-1}$ surrounds $\Gamma_k$ $(k=2,3,\ldots,N)$.  The regions $A_k$ $(k=1,2,\ldots,N)$ stand for homogeneous media. Assume that
\begin{equation}\label{eq:paracho01}
\sigma(x)=\sigma_k, \quad x\in A_k, \quad k=1, 2, \ldots, N.
\end{equation}
%The distinct conductivities $\sigma_k$ $(k = 0, 1, \ldots, N)$ correspond to the fact that the medium consists of several physically different materials.
It is nature that the solution $u$ to the conductivity problem (\ref{eq:mainmd02}), with the multi-layer structure defined above, satisfies the transmission conditions
\begin{equation}\label{eq:transmscds}
u|_{+}=u|_{-}\quad\mbox{and}\quad\sigma_{k-1}\frac{\p u}{\p \nu_k}|_{+}=\sigma_{k}\frac{\p u}{\p \nu_k}|_{-}\quad\mbox{on}\quad \Gamma_k,\quad k=1,2,\ldots,N,
\end{equation}
where we used the notation $\nu_k$  to indicate the outward normal on $\Gamma_k$ and $$
\left.w\right|_{\pm}(x)=\lim _{h \rightarrow 0^{+}} w(x \pm h {\nu}),  \quad x \in \Gamma_k,
$$
for an arbitrary function $w$.

Let  $\Gamma$ be a $C^{1,\eta}$ surface. Let $H^{s}(\Gamma)$, for $s\in\RR$,  be the usual $L^2$-Sobolev space and let
$$H^{s}_0 (\Gamma):=\left\{\phi \in H^{s}(\Gamma) : \int_{\Gamma} \phi =0\right\}.$$
For $s=0$, we
use the notation $L^2_0(\Gamma)$.
Let $ G$ be the fundamental solution to the Laplacian in $\RR^d$, that is given by
\[
G(x) =\left\{
\begin{array}{ll}
\frac{1}{2 \pi} \ln |x|, & d=2,\\
\frac{1}{(2-d) \omega_{d}}|x|^{2-d}, & d \geqslant 3,
\end{array}
\right.
\]
where $\omega_{d}$ is the area of the unit sphere in $\RR^d$.
We denote by $\Scal_{\Gamma} : H^{-1/2}(\Gamma)\rightarrow H^{1}(\RR^d)$ the single layer potential operator
\[
\Scal_{\Gamma}[\varphi](x):=\int_{\Gamma}G(x-y)\varphi(y)~\mathrm{d} s(y), \quad x\in \RR^d,
\]
and the double layer potential
$\mathcal{D}_{\Gamma}:H^{1/2}(\Gamma)\rightarrow H^{1}(\RR^d\setminus\Gamma)$ given by
\[
\mathcal{D}_{\Gamma}[\varphi](x):=\int_{\Gamma}\frac{\p}{\p \nu_y}G(x-y)\varphi(y)~\mathrm{d} s(y), \quad x\in \RR^d\setminus\Gamma,
\]
and $\Kcal_{\Gamma} : H^{1/2}({\Gamma})\rightarrow H^{1/2}({\Gamma})$ the Neumann-Poincar\'e (NP) operator
\begin{equation}\label{NPoperator}
\Kcal_{{\Gamma}}[\varphi](x):=\mbox{p.v.}\int_{{\Gamma}}\frac{\p G(x-y)}{\p \nu_y}\varphi(y)~\mathrm{d} s(y),
\end{equation}
where p.v. stands for the Cauchy principle value.
The single layer potential operator $\Scal_{\Gamma} $ and the double layer potential operator $\mathcal{D}_{\Gamma} $ satisfy the trace formulae (cf. \cite{HK07:book})
\begin{equation} \label{eq:trace}
\frac{\p}{\p\nu}\Scal_{\Gamma} [\varphi] \Big|_{\pm} = (\pm \frac{1}{2}I+
\Kcal_{{\Gamma}}^*)[\varphi] \quad \mbox{on } {\Gamma},
\end{equation}
\[
\mathcal{D}_{{\Gamma}}[\varphi]\Big|_{\pm}=(\mp \frac{1}{2}I+
\Kcal_{{\Gamma}} )[\varphi] \quad \mbox{on } {\Gamma},
\]
where $\Kcal_{{\Gamma}}^*$ is the adjoint operator of $\Kcal_{\Gamma} $ with respect to the $L^2$ inner product.

It can be seen that the solution $u$ to \eqref{eq:mainmd02} may be represented as
\begin{equation}\label{eq:general_solution}
u(x)=H(x)+\sum_{k=1}^{N}\Scal_{\Gamma_k}[\phi_k](x)
\end{equation}
for some functions $\phi_k\in L^2_0(
\Gamma_k)$. Since $\Scal_{\Gamma_k}[\phi_k]$ is continuous across $\Gamma_k$, the first condition in \eqref{eq:transmscds} is automatically satisfied. By using the second condition in \eqref{eq:transmscds}, we can deduce the following equations
\[
\sigma_{k-1}\left(\frac{\p H}{\p \nu_k}+\left.\frac{\p \Scal_{\Gamma_k}[\phi_k]}{\p \nu_k}\right|_{+}+\sum_{l\neq k}^{N}\frac{\p \Scal_{\Gamma_l}[\phi_l]}{\p \nu_k}\right)=\sigma_{k}\left(\frac{\p H}{\p \nu_k}+\left.\frac{\p \Scal_{\Gamma_k}[\phi_k]}{\p \nu_k}\right|_{-}+\sum_{l\neq k}^{N}\frac{\p \Scal_{\Gamma_l}[\phi_l]}{\p \nu_k}\right).
\]
Using the jump formula \eqref{eq:trace} for the normal derivative of the single layer potentials, the above equations can be rewritten as
\begin{equation}\label{eq:integralrep}
\begin{split}
\begin{bmatrix}
\lambda_{1}I - \Kcal_{\Gamma_1}^* & -\nu_1\cdot\nabla\Scal_{\Gamma_2} & \cdots & -\nu_1\cdot\nabla\Scal_{\Gamma_{N}} \\
-\nu_2\cdot\nabla\Scal_{\Gamma_1} & \lambda_{2}I-\Kcal_{\Gamma_2}^* & \cdots & -\nu_2\cdot\nabla\Scal_{\Gamma_{N}}\\
\vdots & \vdots &\ddots &\vdots \\
-\nu_{N}\cdot\nabla\Scal_{\Gamma_1} & -\nu_{N}\cdot\nabla\Scal_{\Gamma_2} & \cdots & \lambda_{N}I-\Kcal_{\Gamma_{N}}^*
\end{bmatrix}
\begin{bmatrix}
\phi_1  \\
\phi_2 \\
\vdots  \\
\phi_N
\end{bmatrix} = \begin{bmatrix}
\nu_1\cdot\nabla H  \\
\nu_2\cdot\nabla H \\
\vdots  \\
\nu_N\cdot\nabla H
\end{bmatrix},
\end{split}
\end{equation}
on $\mathcal{H}_0 = L_0^2(\Gamma_1)\times L_0^2(\Gamma_2)\times\cdots\times L_0^2(\Gamma_{N})$,
where
\begin{equation}\label{lamdk}
\lambda_k=\frac{\sigma_{k}+\sigma_{k-1}}{2(\sigma_{k}-\sigma_{k-1})},\quad k=1,2,\ldots,N,
\end{equation}
and $\sigma_0 = 1$. Here $\sigma_{k}\neq\sigma_{k-1}$, $k=1,2,\ldots,N.$
Let
$\mathbb{K}_A^*$ be an $N$-by-$N$ matrix type NP operator  on $\mathcal{H} :=L^2(\Gamma_1)\times L^2(\Gamma_2)\times\cdots\times L^2(\Gamma_{N})$ defined by
\begin{equation}\label{eq:app11}
\begin{split}
\mathbb{K}_A^*:=
\begin{bmatrix}
\Kcal_{\Gamma_1}^* & \nu_1\cdot\nabla\Scal_{\Gamma_2} & \cdots & \nu_1\cdot\nabla\Scal_{\Gamma_{N}} \\
\nu_2\cdot\nabla\Scal_{\Gamma_1} & \Kcal_{\Gamma_2}^* & \cdots & \nu_2\cdot\nabla\Scal_{\Gamma_{N}}\\
\vdots & \vdots &\ddots &\vdots \\
\nu_{N}\cdot\nabla\Scal_{\Gamma_1} & \nu_{N}\cdot\nabla\Scal_{\Gamma_2} & \cdots & \Kcal_{\Gamma_{N}}^*
\end{bmatrix},
\end{split}
\end{equation}
and let  $\bm\phi := (\phi_1,\phi_2,\ldots,\phi_{N})^T$, $\bm{g}:=\left(\nu_1\cdot\nabla H,\nu_2\cdot\nabla H,\ldots,\nu_N\cdot\nabla H\right)^T$.
Then, \eqref{eq:integralrep} can be rewritten in the form
\begin{equation}\label{IRE}
(\mathbb{I}^{\lambda}-\mathbb{K}_A^*)\bm\phi = \bm{g},
\end{equation}
where $\mathbb{I}^{\lambda}$ is given by
\[
\mathbb{I}^{\lambda}:=
\begin{bmatrix}
\lambda_{1}I & 0 & \cdots & 0\\
0 & \lambda_{2}I & \cdots & 0\\
\vdots & \vdots &\ddots &\vdots \\
0 & 0 & \cdots & \lambda_{N}I
\end{bmatrix}.
\]

For the spectrum of $\mathbb{K}_A^*$, we have the following result which is a generalization of \cite[Lemma 3.1]{ACKLM1} on two-layer structures.
\begin{lem}\label{SPNP}
The spectrum of $\mathbb{K}_A^*$ on $\mathcal{H}$ lies in the interval $(-1/2, 1/2]$.
\end{lem}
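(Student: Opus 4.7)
The strategy is to mimic the classical proof that the scalar Neumann--Poincar\'e operator has spectrum in $(-1/2,1/2]$, applied to the harmonic function generated by an eigenfunction. Let $\mu$ be an eigenvalue of $\mathbb{K}_A^*$ on $\mathcal{H}$ with nontrivial eigenfunction $\bm\phi=(\phi_1,\ldots,\phi_N)$, and set
$$u(x):=\sum_{k=1}^N\Scal_{\Gamma_k}[\phi_k](x),\qquad x\in\RR^d\setminus\bigcup_k\Gamma_k.$$
Combining the single-layer jump formula \eqref{eq:trace} with the $k$-th component of $\mathbb{K}_A^*\bm\phi=\mu\bm\phi$ immediately yields the transmission identities
$$\frac{\p u}{\p\nu_k}\bigg|_{\pm}=\Big(\mu\pm\tfrac12\Big)\phi_k\qquad\text{on }\Gamma_k,\quad k=1,\ldots,N.$$
Reality of $\mu$ would follow either from a Plemelj--Calder\'on symmetrization (which renders $\mathbb{K}_A^*$ self-adjoint in the inner product induced by the negative-semidefinite single-layer matrix $\mathbb{S}$ with $(k,l)$-entry $\Scal_{\Gamma_l}|_{\Gamma_k}$) or from taking imaginary parts of the energy identities below; in either case one may reduce to $\mu\in\RR$ and $\bm\phi$ real.

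Next I would apply Green's identity in each of the layers $A_0:=\RR^d\setminus\overline A,\,A_1,\ldots,A_N$, taking care that $\nu_k$ points outward from the bounded domain enclosed by $\Gamma_k$. With $J_k:=\int_{\Gamma_k}u\phi_k\,ds$, a telescoping computation over the nested layers should produce
$$f_j:=\int_{A_j}|\nabla u|^2\,dx=\Big(\mu-\tfrac12\Big)J_j-\Big(\mu+\tfrac12\Big)J_{j+1},\qquad 0\le j\le N,$$
under the boundary convention $J_0=J_{N+1}=0$. Nonnegativity $f_j\ge 0$ then yields the key monotonicity $(\mu-\tfrac12)J_j\ge(\mu+\tfrac12)J_{j+1}$, which drives all the bounds.

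If $\mu>1/2$, then both $\mu\pm\tfrac12>0$, and the chain starts from $f_0\ge 0\Rightarrow J_1\le 0$ and $f_N\ge 0\Rightarrow J_N\ge 0$; propagating inward from both ends forces $J_k\equiv 0$, hence $f_j\equiv 0$, so $u$ is constant on every layer, and the decay $u=\Ocal(|x|^{-1})$ yields $u\equiv 0$ and $\bm\phi=\mathbf 0$. The case $\mu<-1/2$ is excluded by the symmetric argument with signs reversed. For the delicate endpoint $\mu=-1/2$, the identity $\p u/\p\nu_k|_+\equiv 0$ on every $\Gamma_k$, combined with exterior Neumann uniqueness under the $\Ocal(|x|^{-1})$ decay, gives $u\equiv 0$ in $A_0$; continuity across $\Gamma_1$ then yields $u|_{\Gamma_1}=0$, and in $A_1$ we have Dirichlet zero on $\Gamma_1$ and Neumann zero on $\Gamma_2$, so Green's identity in $A_1$ forces $\nabla u\equiv 0$ and hence $u\equiv 0$ in $A_1$. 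Iterating this inward propagation through $A_2,\ldots,A_N$ yields $u\equiv 0$ everywhere, contradicting $\bm\phi\neq\mathbf 0$. The main obstacles I anticipate are (i) the sign bookkeeping for the enclosed-domain normals $\nu_k$ in Green's identity over nested interfaces, (ii) the logarithmic decay in dimension two, which may necessitate a preliminary reduction to the mean-zero subspace of $\mathcal{H}$, and (iii) organizing the inward propagation at $\mu=-1/2$ cleanly through all $N$ nested layers via mixed boundary-value problems.
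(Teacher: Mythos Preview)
Your approach coincides with the paper's through the main computation: set $u=\sum_k\Scal_{\Gamma_k}[\phi_k]$, read off $\partial_{\nu_k}u|_{\pm}=(\mu\pm\tfrac12)\phi_k$ from the eigenvalue relation and \eqref{eq:trace}, and apply Green's identity layer by layer to obtain precisely your telescoping relations $f_j=(\mu-\tfrac12)J_j-(\mu+\tfrac12)J_{j+1}$; these are the paper's identities \eqref{equ215}, and the positivity $\sum f_j>0$ is its \eqref{sumge0}. The endgame is where you differ: rather than your inward sign-propagation through the chain and a separate $\mu=-\tfrac12$ argument via iterated mixed boundary-value problems, the paper collapses the relations into a single quotient $\mu=\bigl(V_0-\sum_{k\ge1}V_k\bigr)\big/\bigl(2\sum_{k\ge0}V_k\bigr)$ and reads off $-\tfrac12<\mu<\tfrac12$ directly. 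Your case-by-case propagation is a little longer but arguably more robust, since the single quotient formula needs care to justify when $N\ge2$; your route handles the endpoint $\mu=-\tfrac12$ by an independent mechanism and never asks for a closed expression in $\mu$.

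Regarding your obstacle (ii), the paper resolves it before touching the energies, and you will need the same step. Integrating the $k$-th eigenvalue equation over $\Gamma_k$ and using the duality \eqref{adjointdiSe} together with $\Kcal_{\Gamma_k}[1]=\tfrac12$ and $\Dcal_{\Gamma_k}[1]|_{\Gamma_l}=1$ for $l>k$, $=0$ for $l<k$, one obtains $(\mu-\tfrac12)\int_{\Gamma_k}\phi_k=\sum_{l>k}\int_{\Gamma_l}\phi_l$; starting from $k=N$ and descending forces either $\mu=\tfrac12$ or $\phi_k\in L^2_0(\Gamma_k)$ for every $k$. This is exactly what furnishes the decay $u=O(|x|^{1-d})$, $\nabla u=O(|x|^{-d})$ that your computation of $f_0$ and your exterior-Neumann uniqueness at $\mu=-\tfrac12$ rely on. With that reduction inserted, your proposal is complete.
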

\begin{proof}[\bf Proof]
Denote by $\langle u,v \rangle_{L^2(\Gamma)}$ the Hermitian product on $L^{2}(\Gamma)$ with  $\Gamma = \Gamma_k$, for some $k=1,2,\ldots,N.$ By interchange orders of integration, it is easy to see that for $l\neq k $,
\begin{equation} \label{adjointdiSe}
\left\langle \pd{\Scal_{\Gamma_l}[\vp_l]}{\nu_k} , \vp_k
\right\rangle_{L^2(\Gamma_k)} = \left\langle \vp_l, \Dcal_{\Gamma_k}[\vp_k]
\right\rangle_{L^2(\Gamma_l)}.
\end{equation}
Let $\lambda$ be a point in the spectrum of $\mathbb{K}_A^*$.  Then there exists a non-zero vector $\bm\phi = (\phi_1,\phi_2,\ldots,\phi_{N})^T\in \mathcal{H}$ such that
\begin{equation}\label{solu22}
\Kcal_{\Gamma_k}^*[\phi_k]+\sum_{l\neq k}^{N}\frac{\p \Scal_{\Gamma_l}[\phi_l]}{\p \nu_k}=\lambda\phi_k, \quad\mbox{on}\quad \Gamma_k, \quad k=1,2,\ldots,N.
\end{equation}
By integrating the above equations on $\Gamma_k$, $k=1,2,\ldots,N$, and using
\eqref{adjointdiSe}, we obtain
\begin{equation}\label{equ213}
\left\{
\begin{array}{ll}
\left(\lambda-\frac{1}{2}\right)\int_{ \Gamma_{k}}\phi_k(y)~\mathrm{d}s(y)=\sum_{l=k+1}^{N}\int_{ \Gamma_{l}}\phi_l(y)~\mathrm{d}s(y), & k=1,2,\ldots,N-1,\\
\left(\lambda-\frac{1}{2}\right)\int_{ \Gamma_{k}}\phi_k(y)~\mathrm{d}s(y)=0, &k=N.
\end{array}
\right.
\end{equation}
Here, we used the facts that $\Kcal_{\Gamma_k}[1]={1}/{2}$, for all
$k=1,2,\ldots,N,$ and
\[
\left.\Dcal_{\Gamma_k}[1]\right|_{\Gamma_l} =\left\{
\begin{array}{ll}
1, & l>k,\\
0, &l<k.
\end{array}
\right.
\]
Thus, from \eqref{equ213}, we have that either $\lambda = 1/2$ or $\lambda \neq 1/2$ with $\phi_k\in L_0^2(\Gamma_k)$, for all $k=1,2,\ldots,N,$ holds.
We next assume that $\lambda \neq 1/2$ and consider
\[
u(x):=\sum_{k=1}^{N}\Scal_{\Gamma_k}[\phi_k](x), \quad x\in \RR^d
\]
for $d\geqslant 2$.
Since $\phi_k\in L_0^2(\Gamma_k)$, $k=1,2,\ldots,N,$ we have $
u(x) = O(|x|^{1-d})$, and  $\nabla u(x) = O(|x|^{-d}),
$
as $|x|\to\infty$ for $d\geqslant 2$.
Hence the following integrals are finite:
\begin{equation}\label{equ214}
V_k:=\int_{A_{k}}|\nabla u |^2~\mathrm{d}x\geqslant 0, \quad k=0,1,\ldots,N.
\end{equation}
We next claim
\begin{equation}\label{sumge0}
\sum_{k=0}^{N}V_k>0.
\end{equation}
Indeed, if $V_k = 0$ for all $k=0,1,\ldots,N,$ then $u(x)=$ constant in $A_k$ for all $k=0,1,\ldots,N.$ It follows that
\[
\phi_k=\left.\frac{\p u}{\p \nu_k}\right|_+-\left.\frac{\p u}{\p \nu_k}\right|_-=0, \mbox{ for all } k=1,2,\ldots,N.
\]
Hence $\bm\phi = \bm 0$, which is a contradiction.

On the other hand,  we obtain from Green's formulas, the jump relation \eqref{eq:trace}, and \eqref{solu22} that
\begin{equation}\label{equ215}
\left\{
\begin{array}{ll}
V_0 = -\(\lambda+\frac{1}{2}\)\int_{ \Gamma_{1}}\phi_1u~\mathrm{d}s, & \\
V_k =\(\lambda-\frac{1}{2}\)\int_{ \Gamma_{k}}\phi_ku~\mathrm{d}s -\(\lambda+\frac{1}{2}\)\int_{ \Gamma_{k+1}}\phi_{k+1}u~\mathrm{d}s, &k=1,2,\ldots,N-1,\\
V_N = \(\lambda-\frac{1}{2}\)\int_{ \Gamma_{N}}\phi_Nu~\mathrm{d}s.&
\end{array}
\right.
\end{equation}
It follows that
\begin{equation}\label{equ216}
\lambda = \frac{V_0-\sum_{k=1}^{N}V_k}{2\left(\sum_{k=0}^{N}V_k\right)}.
\end{equation}
It follows from \eqref{equ214} and \eqref{sumge0} that $-1/2<\lambda<1/2$.

The proof is complete.
\end{proof}

Based on the analysis above, we are now in the position to present the integral representation for the perturbation filed.
\begin{thm}\label{repsolunq}
	Let $u$ be the solution of the conductivity problem \eqref{eq:mainmd02} in $\RR^d$ for $d = 2\mbox{ or } 3$, with the conductivity $\sigma $ given by \eqref{eq:paracho01} and the transmission conditions given by \eqref{eq:transmscds}.
	There are unique functions $\phi_k\in L_0^2(\Gamma_k)$, $k=1,2,\ldots,N,$ such that
	\begin{equation}\label{rep_solu}
	u(x)=H(x)+\sum_{k=1}^{N}\Scal_{\Gamma_k}[\phi_k](x).
	\end{equation}
	The potentials $\phi_k, k=1,2,\ldots,N,$ satisfy
	\begin{equation}\label{solu2}
	\left(\lambda_k-\Kcal_{\Gamma_k}^*\right)[\phi_k]-\sum_{l\neq k}^{N}\left.\frac{\p \Scal_{\Gamma_l}[\phi_l]}{\p \nu_k}\right|_{\Gamma_k}=\left.\frac{\p H}{\p \nu_k}\right|_{\Gamma_k}.
	\end{equation}
\end{thm}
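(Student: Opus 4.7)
The plan is to reduce the statement to the unique solvability of the block integral system (\ref{IRE}) (equivalently (\ref{solu2})) for $\bm\phi\in\mathcal{H}_0$; once this is established, the superposition (\ref{rep_solu}) automatically yields a solution of (\ref{eq:mainmd02}) satisfying the transmission conditions (\ref{eq:transmscds}). Everything thus reduces to showing that $\mathbb{I}^\lambda-\mathbb{K}_A^*$ is invertible from $\mathcal{H}_0$ onto itself and that the data $\bm g=(\nu_1\cdot\nabla H,\ldots,\nu_N\cdot\nabla H)^T$ actually lies in $\mathcal{H}_0$.

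First I would verify that both the data and the operator respect $\mathcal{H}_0$. The divergence theorem applied to the harmonic function $H$ on the region enclosed by $\Gamma_k$ gives $\int_{\Gamma_k}\nu_k\cdot\nabla H\,ds=0$, so $\bm g\in\mathcal{H}_0$. For the operator itself, integrating the $k$-th component of $(\mathbb{I}^\lambda-\mathbb{K}_A^*)\bm\phi$ over $\Gamma_k$ and invoking $\mathcal{K}_{\Gamma_k}[1]=1/2$ together with the Gauss-type identity (already used in Lemma \ref{SPNP}) asserting that $\int_{\Gamma_k}\partial_{\nu_k(x)}G(x-y)\,ds(x)$ equals $1$ for $y$ inside $\Gamma_k$ and $0$ for $y$ outside, shows that the zero-mean property propagates through the system. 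Next, decompose $\mathbb{K}_A^* = \mathbb{D} + \mathbb{C}$ into its block-diagonal part $\mathbb{D}$, with entries $\mathcal{K}_{\Gamma_k}^*$, and its off-diagonal remainder $\mathbb{C}$. Since the surfaces $\Gamma_k$ are mutually disjoint ($\Gamma_{k-1}$ strictly encloses $\Gamma_k$), the kernels $\nu_k(x)\cdot\nabla_x G(x-y)$ with $x\in\Gamma_k$, $y\in\Gamma_l$, $l\neq k$, are smooth and $\mathbb{C}$ is compact on $\mathcal{H}_0$. A direct computation based on (\ref{lamdk}), using $\sigma_k>0$ and $\sigma_k\neq\sigma_{k-1}$, gives $|\lambda_k|>1/2$, and because the spectrum of $\mathcal{K}_{\Gamma_k}^*$ on $L^2_0(\Gamma_k)$ is classically contained in $(-1/2,1/2)$, each diagonal block $\lambda_k I-\mathcal{K}_{\Gamma_k}^*$ is invertible on $L^2_0(\Gamma_k)$. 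Hence $\mathbb{I}^\lambda-\mathbb{D}$ is invertible on $\mathcal{H}_0$, and $\mathbb{I}^\lambda-\mathbb{K}_A^*$ is a compact perturbation of an invertible operator, therefore Fredholm of index zero on $\mathcal{H}_0$.

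To upgrade the Fredholm property to invertibility I would prove injectivity. Given $\bm\phi\in\mathcal{H}_0$ with $(\mathbb{I}^\lambda-\mathbb{K}_A^*)\bm\phi=0$, define $v(x):=\sum_{k=1}^N\mathcal{S}_{\Gamma_k}[\phi_k](x)$. Continuity of single layer potentials gives $v|_+=v|_-$ on each $\Gamma_k$, while the jump relation (\ref{eq:trace}) together with the vanishing right-hand side convert the hypothesis into (\ref{eq:transmscds}) for $v$ with $H\equiv 0$. Because $\phi_k\in L^2_0(\Gamma_k)$, we have $v(x)=\mathcal{O}(|x|^{1-d})$ as $|x|\to\infty$ (the zero-mean condition cancels the logarithmic leading term when $d=2$), so $v$ is a finite-energy solution of the homogeneous multi-layer conductivity problem in $\mathbb{R}^d$. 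A standard energy identity, using $\sigma_k>0$, forces $v\equiv 0$, after which $\phi_k=\partial_{\nu_k}v|_+-\partial_{\nu_k}v|_-=0$ for each $k$. Combined with the Fredholm step this yields invertibility, so $\bm\phi:=(\mathbb{I}^\lambda-\mathbb{K}_A^*)^{-1}\bm g\in\mathcal{H}_0$ is well-defined; uniqueness of the representation (\ref{rep_solu}) then follows from the same injectivity applied to the difference of any two candidate sets of densities.

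The main obstacle I anticipate is the injectivity step, and within it the asymptotic decay of $v$ in two dimensions, where the zero-mean constraint on the $\phi_k$ is essential for eliminating the logarithmic tail of the single layer potential; the compatibility calculation in the first part of the argument is precisely what enforces this constraint along any solution of the system.
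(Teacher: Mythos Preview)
Your proposal is correct and follows essentially the same approach as the paper: decompose $\mathbb{I}^\lambda-\mathbb{K}_A^*$ into its block-diagonal part (invertible on $\mathcal{H}_0$ since $|\lambda_k|>1/2$) and its compact off-diagonal remainder, then apply the Fredholm alternative and prove injectivity by showing that the homogeneous densities produce a solution of the transmission problem with $H\equiv 0$, which must vanish. The paper invokes the well-posedness of \eqref{eq:mainmd02}--\eqref{eq:transmscds} for the vanishing step where you spell out the energy argument, and it omits the explicit checks that $\bm g\in\mathcal{H}_0$ and that the operator preserves $\mathcal{H}_0$; these are welcome clarifications on your part, but the overall architecture is the same.
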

\begin{proof}[\bf Proof]
	It follows from \eqref{eq:trace} that $u$ defined by \eqref{rep_solu} and \eqref{solu2} is the solution of  the transmission problem \eqref{eq:mainmd02}--\eqref{eq:transmscds}. Then it suffices to prove that the integral equation \eqref{solu2} has a unique solution.
	
	We next prove that the operator $T:\mathcal{H}_0\to\mathcal{H}_0$ defined by
	\[
	\begin{aligned}
	T(\phi_1,\phi_2,\ldots,\phi_{N})&=T_0(\phi_1,\phi_2,\ldots,\phi_{N})+T_1(\phi_1,\phi_2,\ldots,\phi_{N})\\
	&:=\((\lambda_1-\Kcal_{\Gamma_1}^*)[\phi_1],(\lambda_2-\Kcal_{\Gamma_2}^*)[\phi_2],\ldots,(\lambda_N-\Kcal_{\Gamma_N}^*)[\phi_N]\)\\
	&\quad - \(\sum_{l\neq 1}^{N}\left.\frac{\p \Scal_{\Gamma_l}[\phi_l]}{\p \nu_1}\right|_{\Gamma_1},\sum_{l\neq 2}^{N}\left.\frac{\p \Scal_{\Gamma_l}[\phi_l]}{\p \nu_2}\right|_{\Gamma_2},\ldots,\sum_{l\neq N}^{N}\left.\frac{\p \Scal_{\Gamma_l}[\phi_l]}{\p \nu_N}\right|_{\Gamma_N}\)
	\end{aligned}
	\]
	is invertible. From \cite[Theorem 2.21]{HK07:book}, one has that $T_0$ is invertible on $\mathcal{H}_0$.  Moreover, due to the fact that the surfaces $\Gamma_l$ do not intersect,  then $T_1$ is compact on $\mathcal{H}_0$. Therefore, by the Fredholm alternative, it suffices to prove that $T$ is injective  on $\mathcal{H}_0$.
	If $T(\phi_1,\phi_2,\ldots,\phi_{N})=0,$ then
	\[
	u(x)=\sum_{k=1}^{N}\Scal_{\Gamma_k}[\phi_k](x)
	\]
	is the solution  to \eqref{eq:mainmd02} with $H=0$. By the well-posedness of \eqref{eq:mainmd02}--\eqref{eq:transmscds}, we get $u\equiv 0$. Particularly, $\Scal_{\Gamma_k}[\phi_k]$ is smooth across $\Gamma_k$, $k=1,2,\ldots,N$.
	Hence,
	\[
	\phi_k=\left.\frac{\p \Scal_{\Gamma_k}[\phi_k]}{\p \nu_k}\right|_+-\left.\frac{\p \Scal_{\Gamma_k}[\phi_k]}{\p \nu_k}\right|_-=0.
	\]
	The proof is complete.
\end{proof}

\section{Generalized Polarization Tensors of multi-layer structures}\label{sec:03}
Our aim in this section is to introduce the concept of Generalized Polarization Tensors of multi-layer structures. These concepts are defined in a way analogous to the
generalized polarization tensors introduced in \cite{ADKL14,HK04book}. We also give some important properties for the GPTs.
These results will turn out to be crucial for our approach to determine the location
and some geometric and material features of multi-layer structures.

\subsection{Definition of GPTs}
With Theorem \ref{repsolunq}, we can proceed to introduce the polarization
tensors of multi-layer structures.
For a multi-index $\alpha=(\alpha_1,\ldots,\alpha_d)\in\mathbb{N}^d$, let $x^{\alpha} = x^{\alpha_1}_1\cdots x^{\alpha_d}_d $ and $\partial^{\alpha}=\partial^{\alpha_1}_1\cdots \partial^{\alpha_d}_d $, with $\partial_j=\partial/\partial x_j$. Denote by $\bm{e}_k:=(0,0,\ldots,1,0,\ldots,0)^T$ the $N$-dimensional vector with the $k$-th entrance be one.
With the help of Lemma \ref{SPNP} and \eqref{IRE}, we have that
\[
(u-H)(x) = \sum_{k=1}^{N}\Scal_{\Gamma_k}(\bm{e}_k^T(\mathbb{I}^{\lambda}-\mathbb{K}_A^*)^{-1}\left(\left(\nu_1\cdot\nabla H,\nu_2\cdot\nabla H,\ldots,\nu_N\cdot\nabla H\right)^T)\right)(x),
\]
this, together with the Taylor expansion
\[
G(x-y) = \sum_{|\alpha|=0}^{+\infty}\frac{(-1)^\alpha}{\alpha!}\partial^{\alpha} G(x)y^{\alpha},\quad x\to+\infty,
\]
and $y$ in a compact set, we can obtain that the far-field expansion for the  perturbed electric potential
\begin{equation}\label{FFE}
\begin{aligned}
&\quad (u-H)(x)\\&= \sum_{k=1}^{N}\int_{ \Gamma_{k}}G(x-y)(\bm{e}_k^T(\mathbb{I}^{\lambda}-\mathbb{K}_A^*)^{-1}\left(\left(\nu_1\cdot\nabla H,\nu_2\cdot\nabla H,\ldots,\nu_N\cdot\nabla H\right)^T\right)~\mathrm{d} s(y)\\
& = \sum_{k=1}^{N}\sum^{+\infty}_{|\alpha|=1} \sum^{+\infty}_{|\beta|=1}\frac{(-1)^{|\alpha|}}{\alpha!\beta!}\partial^{\alpha} G(x)\partial^{\beta} H(0)\int_{ \Gamma_{k}}y^{\alpha}(\bm{e}_k^T(\mathbb{I}^{\lambda}-\mathbb{K}_A^*)^{-1}\left(\left(\nu_1\cdot\nabla y^{\beta},\nu_2\cdot\nabla y^{\beta},\ldots,\nu_N\cdot\nabla y^{\beta}\right)^T\right)~\mathrm{d} s(y),
\end{aligned}
\end{equation}
as $x\to+\infty,$ where $(\bm{e}_1,\bm{e}_2,\ldots,\bm{e}_N)$ is an orthonormal basis of $\RR^N$.

\begin{defn}
	For $\alpha$, $\beta\in \mathbb{N}^d$, let $\phi_{k,\beta}$, $k=1,2,\ldots,N$, be the solution of
	\begin{equation}\label{solu3}
	\left(\lambda_k-\Kcal_{\Gamma_k}^*\right)[\phi_{k,\beta}]-\sum_{l\neq k}^{N}\left.\frac{\p \Scal_{\Gamma_l}[\phi_{l,\beta}]}{\p \nu_k}\right|_{\Gamma_k}=\left.\frac{\p y^{\beta}}{\p \nu_k}\right|_{\Gamma_k}.
	\end{equation}
	Then the generalized polarization tensor (GPT) $M_{\alpha\beta}$ is defined to be
	\begin{equation}\label{eq:def_M2}
	M_{\alpha\beta} := \sum_{k=1}^{N}\int_{\Gamma_k} {y}^{\alpha} {\phi}_{k,\beta}({y})~\mathrm{d}s({y}).
	\end{equation}
	If $|\alpha| = |\beta|=1$, we denote $M_{\alpha\beta}$ by $M_{ij}$, $i,j = 1,\ldots,d,$ and call $\mathbf{M} =(M_{ij})_{i,j=1}^{d} $ first-order polarization tensor.
\end{defn}

Formula \eqref{FFE} shows that through the GPTs we have complete information
about the far-field expansion of perturbed electric potential
\begin{equation}\label{FFEPE}
\begin{aligned}
\quad (u-H)(x) = \sum^{+\infty}_{|\alpha|=1} \sum^{+\infty}_{|\beta|=1}\frac{(-1)^{|\alpha|}}{\alpha!\beta!}\partial^{\alpha} G(x)M_{\alpha\beta}\partial^{\beta} H(0),\quad \mbox{ as } x\to+\infty.
\end{aligned}
\end{equation}

\subsection{Properties of GPTs}
In this subsection, we study some interesting physical properties of GPTs, such as   symmetry and positivity. We
emphasize that the harmonic sums of GPTs play a key role. Let $I$ and $J$ be finite index sets. Harmonic
sums of GPTs are $\sum_{\alpha \in I, \beta \in J} a_{\alpha}
b_{\beta} M_{\alpha\beta}$ where $\sum_{\alpha \in I} a_{\alpha}
x^{\alpha}$ and $\sum_{\beta \in J} b_{\beta} x^{\beta}$ are
harmonic polynomials.

We shall derive some symmetry property of the GPTs in the following theorem. Due to the consideration of the special case of inhomogeneous medium: multi-layered  medium,  our definition of GPTs \eqref{eq:def_M2} for multi-layered medium in this paper is more refined compared to the definition in \cite{ADKL14}. We mention that the following theorem can be proved by using the similar arguments in \cite[Lemma 4.2]{ADKL14} and the GPTs therein. Nevertheless, we shall reformulate the proof by using the new form of GPTs defined in \eqref{eq:def_M2}.

\begin{thm}\label{th:sym} Let $I$ and $J$ be finite index sets.
	For any harmonic coefficients $\{a_{\alpha}|\alpha \in I\}$ and
	$\{b_{\beta}|\beta \in J\}$,  we have
	\begin{equation}\label{eq:107}
	\sum_{\alpha \in I}\sum_{\beta \in J} a_{\alpha} b_{\beta} M_{\alpha\beta} = \sum_{\alpha \in I}\sum_{\beta \in J} a_{\alpha} b_{\beta} M_{\beta\alpha}.
	\end{equation}
\end{thm}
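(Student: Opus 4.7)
The strategy is to combine a global reciprocity identity with the far-field expansion \eqref{FFEPE} and Green's representation formula. Set $F_a(x) = \sum_{\alpha \in I} a_\alpha x^\alpha$ and $F_b(x) = \sum_{\beta \in J} b_\beta x^\beta$, and let $u_a, u_b$ be the solutions of \eqref{eq:mainmd02} with background potentials $H = F_a$ and $H = F_b$, respectively. Write $u_a = F_a + w_a$ and $u_b = F_b + w_b$; by Theorem \ref{repsolunq} (together with $\phi_k \in L_0^2(\Gamma_k)$), the scattered fields satisfy $w_{a,b} = O(|x|^{1-d})$ and $\nabla w_{a,b} = O(|x|^{-d})$ at infinity. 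Substituting $\partial^\beta F_b(0) = b_\beta\beta!$ in \eqref{FFEPE} puts $w_b$ into the multipole form $w_b(x) = \sum_\alpha (-1)^{|\alpha|}(\alpha!)^{-1}\partial^\alpha G(x)\, m_\alpha^{(b)}$ with $m_\alpha^{(b)} := \sum_\beta M_{\alpha\beta} b_\beta$, and similarly for $w_a$. Since the two sides of \eqref{eq:107} equal $\sum a_\alpha b_\beta M_{\alpha\beta}$ and $\sum a_\alpha b_\beta M_{\beta\alpha}$ respectively, it suffices to show these two contractions coincide.

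\medskip

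\noindent The first step is to establish a reciprocity identity on a large ball $B_R \supset \overline{A}$. A direct distributional computation using $\nabla \cdot (\sigma \nabla u_a) = \nabla \cdot (\sigma \nabla u_b) = 0$ shows that the vector field $\sigma(u_b \nabla u_a - u_a \nabla u_b)$ is divergence-free on $\RR^d$; since $\sigma|_{\partial B_R} = 1$, the divergence theorem gives
\[
\int_{\partial B_R}\bigl(u_b\, \partial_n u_a - u_a\, \partial_n u_b\bigr)\, ds = 0.
\]
Expanding with $u_a = F_a + w_a$ and $u_b = F_b + w_b$: the $F_b\,\partial_n F_a - F_a\, \partial_n F_b$ contribution integrates to $0$ by Green's identity for harmonic polynomials in $B_R$, while the pure scattered piece $\int_{\partial B_R}(w_b\, \partial_n w_a - w_a\, \partial_n w_b)\, ds \to 0$ as $R \to \infty$ by the decay estimates. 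Passing to the limit yields
\[
\lim_{R \to \infty}\int_{\partial B_R}\bigl(F_b\, \partial_n w_a - w_a\, \partial_n F_b\bigr)\, ds + \lim_{R \to \infty}\int_{\partial B_R}\bigl(w_b\, \partial_n F_a - F_a\, \partial_n w_b\bigr)\, ds = 0.
\]

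\medskip

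\noindent The second step identifies each of these limits with a GPT contraction via a multipole extraction identity. Differentiating Green's representation $p(y) = \int_{\partial B_R}\bigl(p\, \partial_n G(x-y) - G(x-y)\, \partial_n p\bigr)\, ds(x)$ in $y$ and evaluating at $y = 0$ gives, for every entire harmonic polynomial $p$ and multi-index $\alpha$,
\[
\int_{\partial B_R}\bigl(p\, \partial_n \partial^\alpha G - \partial^\alpha G\, \partial_n p\bigr)\, ds = (-1)^{|\alpha|} \partial^\alpha p(0).
\]
Applying this termwise to the multipole series for $w_a$ with $p = F_b$ (so that $\partial^\alpha F_b(0) = b_\alpha \alpha!$), we get $\int_{\partial B_R}(F_b\, \partial_n w_a - w_a\, \partial_n F_b)\, ds = \sum_{\alpha,\beta} a_\alpha b_\beta M_{\beta\alpha}$; symmetrically, $\int_{\partial B_R}(F_a\, \partial_n w_b - w_b\, \partial_n F_a)\, ds = \sum_{\alpha,\beta} a_\alpha b_\beta M_{\alpha\beta}$. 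Substituting into the displayed equation from the previous step yields $\sum a_\alpha b_\beta M_{\beta\alpha} = \sum a_\alpha b_\beta M_{\alpha\beta}$, which is \eqref{eq:107}.

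\medskip

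\noindent The principal obstacle will be the bookkeeping in the reciprocity step: although $\nabla \cdot (\sigma(u_b \nabla u_a - u_a \nabla u_b)) = 0$ is distributionally transparent, a piecewise verification on the layers $A_0 \cap B_R, A_1, \dots, A_N$ requires showing that the interface contributions on each $\Gamma_k$ telescope, which relies on the identity $\sigma_k \partial_{\nu_k} u|_- = \sigma_{k-1} \partial_{\nu_k} u|_+$ (i.e.\ \eqref{eq:transmscds}) applied to both $u_a$ and $u_b$, weighted by the appropriate conductivities from either side. A secondary technical issue is justifying the termwise integration of the multipole series on $\partial B_R$, which is valid because $|\partial^\alpha G(x)| = O(|x|^{2-d-|\alpha|})$ uniformly on $\partial B_R$ for $R$ large and the $\beta$-sum defining $m_\alpha^{(b)}$ is finite.
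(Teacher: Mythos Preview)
Your argument is correct and takes a genuinely different route from the paper's proof. The paper works layer by layer: starting from the identity $\psi_k=(\sigma_k/\sigma_{k-1}-1)\,\partial_{\nu_k}(h+\Psi)|_-$ (your $w_b$ is their $\Psi$, etc.), it applies Green's formula repeatedly on each $A_k$ and carries out a long telescoping computation to arrive at the explicit symmetric bilinear form
\[
\sum_{\alpha,\beta}a_\alpha b_\beta M_{\alpha\beta}
=\sum_{k=1}^{N}(\sigma_k-1)\,\langle f+\Phi,\,h+\Psi\rangle_{A_k}
+\sum_{l,k=1}^{N}\langle \Scal_{\Gamma_l}[\phi_l],\,\Scal_{\Gamma_k}[\psi_k]\rangle_{\RR^d},
\]
whose symmetry in $(f,\Phi)\leftrightarrow(h,\Psi)$ is manifest. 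Your approach instead applies a single global reciprocity identity $\int_{\partial B_R}(u_b\,\partial_n u_a-u_a\,\partial_n u_b)\,ds=0$ (with the interface contributions on each $\Gamma_k$ cancelling via \eqref{eq:transmscds}, exactly as you anticipated), and then extracts the two GPT contractions from the cross terms using Green's representation for harmonic polynomials. Your route is shorter and more conceptual; note also that the cross integrals are in fact equal to the GPT contractions for \emph{every} sufficiently large $R$, not only in the limit, since one may bypass the multipole series altogether by writing $w_b=\sum_k\Scal_{\Gamma_k}[\phi_k^{(b)}]$ and applying Fubini with the representation $F_a(y)=\int_{\partial B_R}(F_a\,\partial_n G(\cdot-y)-G(\cdot-y)\,\partial_n F_a)\,ds$. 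What the paper's longer computation buys is the explicit quadratic-form identity above, which is precisely what is needed to prove the positivity bounds of Theorem~\ref{Positivedef}; your reciprocity argument yields symmetry cleanly but does not by itself give that formula.
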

\begin{proof}[\bf Proof]
Note that
\[
\sum_{\alpha \in I}\sum_{\beta \in J} a_{\alpha} b_{\beta} M_{\alpha\beta} =\sum_{k=1}^{N}\int_{\Gamma_k}\sum_{\alpha \in I} a_{\alpha}{y}^{\alpha}\sum_{\beta \in J} b_{\beta}  {\phi}_{k,\beta}({y})~\mathrm{d}s({y}).
\]
Taking
\[
f(y) = \sum_{\alpha \in I} a_{\alpha}y^{\alpha}, \quad h(y) = \sum_{\beta \in J} b_{\beta}y^{\beta},
\]
\[
\phi_k(y) =\sum_{\alpha \in I} a_{\alpha}\phi_{k,\alpha}(y)\quad\mbox{ and }\quad \psi_k(y) = \sum_{\beta \in J} b_{\beta}\phi_{k,\beta}(y),
\]
it is easy to see that
\[
\sum_{\alpha \in I}\sum_{\beta \in J} a_{\alpha} b_{\beta} M_{\alpha\beta} =\sum_{k=1}^{N}\int_{\Gamma_k}f(y)\psi_k(y)~\mathrm{d}s({y}),
\]
and
\[
\quad \sum_{\alpha \in I}\sum_{\beta \in J} a_{\alpha} b_{\beta} M_{\beta\alpha} = \sum_{k=1}^{N}\int_{\Gamma_k}h(y)\phi_k(y)~\mathrm{d}s({y}).
\]
We next define
\begin{equation}\label{PHSI}
\Phi(x):=\sum_{k=1}^{N}\Scal_{\Gamma_k}[\phi_k](x)\quad\mbox{and}\quad\Psi(x):=\sum_{k=1}^{N}\Scal_{\Gamma_k}[\psi_k](x).
\end{equation}
From the definition of $\phi_{k,\beta}$ , one can readily obtain
\begin{equation}\label{eq:hPsi}
\sigma_{k-1}\frac{\p (h+\Psi)}{\p \nu_k}|_{+}=\sigma_{k}\frac{\p (h+\Psi)}{\p \nu_k}|_{-}\quad\mbox{on}\quad \Gamma_k, \quad k=1,2,\ldots,N,
\end{equation}
and the same relation for $f + \Phi$ holds. From \eqref{solu3}, we get that on $\Gamma_k,$ $ k=1,2,\ldots,N,$
\[
\begin{aligned}
\sigma_{k-1}\left.\frac{\p (\Scal_{\Gamma_k}[\psi_k])}{\p \nu_k}\right|_{+}-\sigma_{k}\left.\frac{\p (\Scal_{\Gamma_k}[\psi_k])}{\p \nu_k}\right|_{-} &=\sum_{\beta \in J} b_{\beta} \left(\sigma_{k-1}\left.\frac{\p (\Scal_{\Gamma_k}[\phi_{k,\beta}])}{\p \nu_k}\right|_{+}-\sigma_{k}\left.\frac{\p (\Scal_{\Gamma_k}[\phi_{k,\beta}])}{\p \nu_k}\right|_{-}\right)\\
&=(\sigma_{k}-\sigma_{k-1})\sum_{\beta \in J} b_{\beta}\frac{\p}{\p \nu_k}\left( y^{\beta}+\sum_{l\neq k}^{N}{ \Scal_{\Gamma_l}[\phi_{l,\beta}]}\right)\\
&=(\sigma_{k}-\sigma_{k-1})\frac{\p}{\p \nu_k}\left( h+\sum_{l\neq k}^{N}{ \Scal_{\Gamma_l}[\psi_{l}]}\right).
\end{aligned}
\]
Thus, it follows from \eqref{eq:hPsi} that
\begin{equation}\label{psik}
\begin{aligned}
\psi_k&=\left.\frac{\p \Scal_{\Gamma_k}[\psi_k]}{\p \nu_k}\right|_+-\left.\frac{\p \Scal_{\Gamma_k}[\psi_k]}{\p \nu_k}\right|_-
\\
&=\left.\frac{\p (\Scal_{\Gamma_k}[\psi_k])}{\p \nu_k}\right|_{+}-\frac{\sigma_{k}}{\sigma_{k-1}}\left.\frac{\p (\Scal_{\Gamma_k}[\psi_k])}{\p \nu_k}\right|_{-} + \left(\frac{\sigma_{k}}{\sigma_{k-1}}-1\right)\left.\frac{\p (\Scal_{\Gamma_k}[\psi_k])}{\p \nu_k}\right|_{-}\\
& = \left(\frac{\sigma_{k}}{\sigma_{k-1}}-1\right)\frac{\p}{\p \nu_k}\left( h+\sum_{l\neq k}^{N}{ \Scal_{\Gamma_l}[\psi_{l}]}\right) + \left(\frac{\sigma_{k}}{\sigma_{k-1}}-1\right)\left.\frac{\p (\Scal_{\Gamma_k}[\psi_k])}{\p \nu_k}\right|_{-}\\
& =  \left(\frac{\sigma_{k}}{\sigma_{k-1}}-1\right)\left.\frac{\p (h+\Psi)}{\p \nu_k}\right|_{-}.
\end{aligned}
\end{equation}
Therefore, we get
\begin{equation}\label{SYMGPT}
\begin{aligned}
\sum_{\alpha \in I}\sum_{\beta \in J} a_{\alpha} b_{\beta} M_{\alpha\beta} &=\sum_{k=1}^{N}\left(\frac{\sigma_{k}}{\sigma_{k-1}}-1\right)\int_{\Gamma_k}f\left.\frac{\p (h+\Psi)}{\p \nu_k}\right|_{-}~\mathrm{d}s({y})\\
& = \sum_{k=1}^{N}\left(\frac{\sigma_{k}}{\sigma_{k-1}}-1\right)\int_{\Gamma_k}(f+\Phi)\left.\frac{\p (h+\Psi)}{\p \nu_k}\right|_{-}~\mathrm{d}s({y}) -\sum_{k=1}^{N}\left(\frac{\sigma_{k}}{\sigma_{k-1}}-1\right)\int_{\Gamma_k}\Phi\left.\frac{\p (h+\Psi)}{\p \nu_k}\right|_{-}~\mathrm{d}s({y})\\
& = \sum_{k=1}^{N}\left(\frac{1}{\sigma_{k-1}}-\frac{1}{\sigma_{k}}\right)\sigma_{k}\int_{\Gamma_k}(f+\Phi)\left.\frac{\p (h+\Psi)}{\p \nu_k}\right|_{-}~\mathrm{d}s({y})\\
&\quad -\sum_{k=1}^{N}\int_{\Gamma_k}\Phi\left(\left.\frac{\p \Scal_{\Gamma_k}[\psi_k]}{\p \nu_k}\right|_+-\left.\frac{\p \Scal_{\Gamma_k}[\psi_k]}{\p \nu_k}\right|_-\right)~\mathrm{d}s({y})\\
& = \sum_{k=1}^{N}\left(\frac{1}{\sigma_{k-1}}-\frac{1}{\sigma_{k}}\right)\sigma_{k}\int_{\Gamma_k}(f+\Phi)\left.\frac{\p (h+\Psi)}{\p \nu_k}\right|_{-}~\mathrm{d}s({y})\\
&\quad -\sum_{k=1}^{N}\int_{\Gamma_k}\Phi\left.\frac{\p \Scal_{\Gamma_k}[\psi_k]}{\p \nu_k}\right|_+~\mathrm{d}s({y})
+\sum_{k=1}^{N}\int_{\Gamma_k}\Phi\left.\frac{\p \Scal_{\Gamma_k}[\psi_k]}{\p \nu_k}\right|_-~\mathrm{d}s({y}).
\end{aligned}
\end{equation}
We next analyze \eqref{SYMGPT} term by term. For convenience we use the the  notation
$
\langle u,v \rangle_D = \int_{D}\nabla u\cdot \nabla v ~\mathrm{d}x,
$
where $D$ is a Lipschitz domain in $\mathbb{R}^d$.
It follows from \eqref{eq:hPsi} that
\[
\begin{aligned}
&\quad\sum_{k=1}^{N}\left(\frac{1}{\sigma_{k-1}}-\frac{1}{\sigma_{k}}\right)\sigma_{k}\int_{\Gamma_k}(f+\Phi)\left.\frac{\p (h+\Psi)}{\p \nu_k}\right|_{-}~\mathrm{d}s({y})\\
&=\sum_{k=1}^{N-1}\left(\frac{1}{\sigma_{k-1}}-\frac{1}{\sigma_{k}}\right)\sigma_{k}\int_{\Gamma_{k+1}}(f+\Phi)\left.\frac{\p (h+\Psi)}{\p \nu_{k+1}}\right|_+~\mathrm{d}s({y})+\sum_{k=1}^{N}\left(\frac{1}{\sigma_{k-1}}-\frac{1}{\sigma_{k}}\right)\sigma_{k} \langle f+\Phi,h+\Psi \rangle_{A_k}\\
&= \sum_{k=1}^{N-1}\left(\frac{1}{\sigma_{k-1}}-\frac{1}{\sigma_{k}}\right){\sigma_{k+1}}\int_{\Gamma_{k+1}}(f+\Phi)\left.\frac{\p (h+\Psi)}{\p \nu_{k+1}}\right|_-~\mathrm{d}s({y})+\sum_{k=1}^{N}\left(\frac{1}{\sigma_{k-1}}-\frac{1}{\sigma_{k}}\right)\sigma_{k} \langle f+\Phi,h+\Psi \rangle_{A_k}\\
&= \sum_{k=1}^{N-2}\left(\frac{1}{\sigma_{k-1}}-\frac{1}{\sigma_{k}}\right){\sigma_{k+1}}\int_{\Gamma_{k+2}}(f+\Phi)\left.\frac{\p (h+\Psi)}{\p \nu_{k+2}}\right|_+~\mathrm{d}s({y})\\
&\quad +\sum_{m=N-1}^{N}\sum_{k=1}^{m}\left(\frac{1}{\sigma_{k-1}}-\frac{1}{\sigma_{k}}\right)\sigma_{k+N-m} \langle f+\Phi,h+\Psi \rangle_{A_{k+N-m}}\\
&= \left(\frac{1}{\sigma_{0}}-\frac{1}{\sigma_{1}}\right){\sigma_{N}}\int_{\Gamma_{N}}(f+\Phi)\left.\frac{\p (h+\Psi)}{\p \nu_{N}}\right|_-~\mathrm{d}s({y})+\sum_{m=2}^{N}\sum_{k=1}^{m}\left(\frac{1}{\sigma_{k-1}}-\frac{1}{\sigma_{k}}\right)\sigma_{k+N-m} \langle f+\Phi,h+\Psi \rangle_{A_{k+N-m}}\\
&= \sum_{m=1}^{N}\sum_{k=1}^{m}\left(\frac{1}{\sigma_{k-1}}-\frac{1}{\sigma_{k}}\right)\sigma_{k+N-m} \langle f+\Phi,h+\Psi \rangle_{A_{k+N-m}}.\\
\end{aligned}
\]
Then by direct calculation, one further has that
\[
\sum_{m=1}^{N}\sum_{k=1}^{m}\left(\frac{1}{\sigma_{k-1}}-\frac{1}{\sigma_{k}}\right)\sigma_{k+N-m} \langle f+\Phi,h+\Psi \rangle_{A_{k+N-m}} = \sum_{k=1}^{N}\left({\sigma_{k}}-1\right) \langle f+\Phi,h+\Psi \rangle_{A_{k}}.
\]
In a similar manner, one can show that
\[
\begin{aligned}
&\quad \sum_{k=1}^{N}\int_{\Gamma_k}\Phi\left.\frac{\p \Scal_{\Gamma_k}[\psi_k]}{\p \nu_k}\right|_+~\mathrm{d}s({y}) = \sum_{l=1}^{N}\sum_{k=1}^{N}\int_{\Gamma_k}\Scal_{\Gamma_l}[\phi_l]\left.\frac{\p \Scal_{\Gamma_k}[\psi_k]}{\p \nu_k}\right|_+~\mathrm{d}s({y})\\
& = \sum_{l=1}^{N}\left(\sum_{k=2}^{N}\int_{\Gamma_{k-1}}\Scal_{\Gamma_l}[\phi_l]\left.\frac{\p \Scal_{\Gamma_k}[\psi_k]}{\p \nu_{k-1}}\right|_-~\mathrm{d}s({y})-\sum_{k=1}^{N}\langle \Scal_{\Gamma_l}[\phi_l],{ \Scal_{\Gamma_k}[\psi_k]} \rangle_{A_{k-1}}\right)\\
& = \sum_{l=1}^{N}\left(\sum_{k=2}^{N}\int_{\Gamma_{k-1}}\Scal_{\Gamma_l}[\phi_l]\left.\frac{\p \Scal_{\Gamma_k}[\psi_k]}{\p \nu_{k-1}}\right|_+~\mathrm{d}s({y})-\sum_{k=1}^{N}\langle \Scal_{\Gamma_l}[\phi_l],{ \Scal_{\Gamma_k}[\psi_k]} \rangle_{A_{k-1}}\right)\\
& = \sum_{l=1}^{N}\left(\sum_{k=3}^{N}\int_{\Gamma_{k-2}}\Scal_{\Gamma_l}[\phi_l]\left.\frac{\p \Scal_{\Gamma_k}[\psi_k]}{\p \nu_{k-2}}\right|_-~\mathrm{d}s({y})-\sum_{m=1}^{2}\sum_{k=m}^{N}\langle \Scal_{\Gamma_l}[\phi_l],{ \Scal_{\Gamma_k}[\psi_k]} \rangle_{A_{k-m}}\right)\\
&= \sum_{l=1}^{N}\left(\int_{\Gamma_{1}}\Scal_{\Gamma_l}[\phi_l]\left.\frac{\p \Scal_{\Gamma_N}[\psi_N]}{\p \nu_{1}}\right|_+~\mathrm{d}s({y})-\sum_{m=1}^{N-1}\sum_{k=m}^{N}\langle \Scal_{\Gamma_l}[\phi_l],{ \Scal_{\Gamma_k}[\psi_k]} \rangle_{A_{k-m}}\right)\\
& = -\sum_{l=1}^{N}\sum_{m=1}^{N}\sum_{k=m}^{N}\langle \Scal_{\Gamma_l}[\phi_l],{ \Scal_{\Gamma_k}[\psi_k]} \rangle_{A_{k-m}},
\end{aligned}
\]
and
\[
\begin{aligned}
&\quad\sum_{k=1}^{N}\int_{\Gamma_k}\Phi\left.\frac{\p \Scal_{\Gamma_k}[\psi_k]}{\p \nu_k}\right|_-~\mathrm{d}s({y})= \sum_{l=1}^{N}\sum_{k=1}^{N}\int_{\Gamma_k}\Scal_{\Gamma_l}[\phi_l]\left.\frac{\p \Scal_{\Gamma_k}[\psi_k]}{\p \nu_k}\right|_-~\mathrm{d}s({y})\\
& = \sum_{l=1}^{N}\left(\sum_{k=1}^{N-1}\int_{\Gamma_{k+1}}\Scal_{\Gamma_l}[\phi_l]\left.\frac{\p \Scal_{\Gamma_k}[\psi_k]}{\p \nu_{k+1}}\right|_+~\mathrm{d}s({y})+\sum_{k=1}^{N}\langle \Scal_{\Gamma_l}[\phi_l],{ \Scal_{\Gamma_k}[\psi_k]} \rangle_{A_{k}}\right)\\
& = \sum_{l=1}^{N}\left(\sum_{k=1}^{N-1}\int_{\Gamma_{k+1}}\Scal_{\Gamma_l}[\phi_l]\left.\frac{\p \Scal_{\Gamma_k}[\psi_k]}{\p \nu_{k+1}}\right|_-~\mathrm{d}s({y})+\sum_{k=1}^{N}\langle \Scal_{\Gamma_l}[\phi_l],{ \Scal_{\Gamma_k}[\psi_k]} \rangle_{A_{k}}\right)\\
& = \sum_{l=1}^{N}\left(\sum_{k=1}^{N-2}\int_{\Gamma_{k+2}}\Scal_{\Gamma_l}[\phi_l]\left.\frac{\p \Scal_{\Gamma_k}[\psi_k]}{\p \nu_{k+2}}\right|_+~\mathrm{d}s({y})+\sum_{m=N-1}^{N}\sum_{k=1}^{m}\langle \Scal_{\Gamma_l}[\phi_l],{ \Scal_{\Gamma_k}[\psi_k]} \rangle_{A_{k+N-m}}\right)\\
&= \sum_{l=1}^{N}\left(\int_{\Gamma_{N}}\Scal_{\Gamma_l}[\phi_l]\left.\frac{\p \Scal_{\Gamma_k}[\psi_k]}{\p \nu_{N}}\right|_-~\mathrm{d}s({y})+\sum_{m=2}^{N}\sum_{k=1}^{m}\langle \Scal_{\Gamma_l}[\phi_l],{ \Scal_{\Gamma_k}[\psi_k]} \rangle_{A_{k+N-m}}\right)\\
& = \sum_{l=1}^{N}\sum_{m=1}^{N}\sum_{k=1}^{m}\langle \Scal_{\Gamma_l}[\phi_l],{ \Scal_{\Gamma_k}[\psi_k]} \rangle_{A_{k+N-m}}.
\end{aligned}
\]
Then we finally obtain
\begin{equation}\label{eq:315}
\begin{aligned}
&\quad\sum_{\alpha \in I}\sum_{\beta \in J} a_{\alpha} b_{\beta} M_{\alpha\beta} \\
&= \sum_{k=1}^{N}\left({\sigma_{k}}-1\right) \langle f+\Phi,h+\Psi \rangle_{A_{k}} \\
&\quad  + \sum_{l=1}^{N}\sum_{m=1}^{N}\sum_{k=m}^{N}\langle \Scal_{\Gamma_l}[\phi_l],{ \Scal_{\Gamma_k}[\psi_k]} \rangle_{A_{k-m}} +  \sum_{l=1}^{N}\sum_{m=1}^{N}\sum_{k=1}^{m}\langle \Scal_{\Gamma_l}[\phi_l],{ \Scal_{\Gamma_k}[\psi_k]} \rangle_{A_{k+N-m}}\\
&= \sum_{k=1}^{N}\left({\sigma_{k}}-1\right) \langle f+\Phi,h+\Psi \rangle_{A_{k}}  + \sum_{l=1}^{N}\sum_{k=1}^{N}\langle \Scal_{\Gamma_l}[\phi_l],{ \Scal_{\Gamma_k}[\psi_k]} \rangle_{\mathbb{R}^d} .
\end{aligned}
\end{equation}
The symmetry of \eqref{eq:107} follows immediately from \eqref{eq:315} and the proof is complete.
\end{proof}
In order to give the positivity of GPTs, we have the following bounds for GPTs by
following the similar arguments of proof as in \cite[Theorem 4.1]{ADKL14} for the
general inhomogeneous inclusion.
\begin{thm}\label{Positivedef}
	Let $I$ be a finite index set. Let $\{a_{\alpha} | \alpha \in I\}$
	be the set of coefficients such that $f(x):=\sum_{\alpha \in I}
	a_{\alpha} x^{\alpha}$ is a harmonic function. Then we have
\begin{equation}
	\label{eq:111} \sum_{k=1}^{N}\frac{(\Gs_k-1)}{\Gs_k}\int_{A_k} |\nabla f|^2~\mathrm{d} x \leqslant
	\sum_{\alpha,\beta \in I} a_{\alpha}a_{\beta} M_{\alpha\beta} \leqslant  \sum_{k=1}^{N}\int_{A_k}(\Gs_k-1) |\nabla f|^2 ~\mathrm{d} x.
\end{equation}
\end{thm}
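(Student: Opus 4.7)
The plan is to extract both inequalities in (\ref{eq:111}) from a single energy representation, using pure algebra for the lower bound and a Dirichlet-type variational principle for the upper one. Specialising identity (\ref{eq:315}) from the proof of Theorem \ref{th:sym} to the diagonal case $h=f$, $b_\beta=a_\beta$ (so that $\psi_k=\phi_k$ and $\Psi=\Phi$), the harmonic sum becomes
\begin{equation*}
Q:=\sum_{\alpha,\beta\in I}a_\alpha a_\beta M_{\alpha\beta} = \sum_{k=1}^{N}(\sigma_k-1)\int_{A_k}|\nabla u|^2\,dx + \int_{\mathbb{R}^d}|\nabla\Phi|^2\,dx,
\end{equation*}
where $u=f+\Phi$ is the transmission-problem solution associated to $f$ and $\Phi=\sum_{k=1}^N\Scal_{\Gamma_k}[\phi_k]$.

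For the lower bound I would substitute $\nabla f=\nabla u-\nabla\Phi$ into $\sum_{k}(\sigma_k-1)\sigma_k^{-1}\int_{A_k}|\nabla f|^2$ and collect terms via the elementary identities $(\sigma_k-1)-(\sigma_k-1)/\sigma_k=(\sigma_k-1)^2/\sigma_k$ and $1-(\sigma_k-1)/\sigma_k=1/\sigma_k$. A routine calculation produces the completion-of-squares identity
\begin{equation*}
Q - \sum_{k=1}^{N}\frac{\sigma_k-1}{\sigma_k}\int_{A_k}|\nabla f|^2\,dx = \sum_{k=1}^{N}\frac{1}{\sigma_k}\int_{A_k}\bigl|(\sigma_k-1)\nabla u + \nabla\Phi\bigr|^2\,dx + \int_{A_0}|\nabla\Phi|^2\,dx,
\end{equation*}
whose right-hand side is non-negative since every $\sigma_k>0$. (Equivalently, one can apply Young's inequality $2|\nabla u\cdot\nabla f|\leq\sigma_k|\nabla u|^2+|\nabla f|^2/\sigma_k$ on each $A_k$ after expanding the representation.) This step requires no PDE information beyond the representation itself.

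For the upper bound I would introduce the Dirichlet-type energy
\begin{equation*}
J[v]:=\sum_{k=1}^{N}(\sigma_k-1)\int_{A_k}|\nabla v|^2\,dx+\int_{\mathbb{R}^d}|\nabla(v-f)|^2\,dx
\end{equation*}
on the affine class $\{v:\nabla(v-f)\in L^2(\mathbb{R}^d)\}$. Direct evaluation gives $J[u]=Q$ and $J[f]=\sum_k(\sigma_k-1)\int_{A_k}|\nabla f|^2$. Writing $\xi:=v-u$ and expanding,
\begin{equation*}
J[v]-J[u]=2\int_{\mathbb{R}^d}(\sigma\nabla u-\nabla f)\cdot\nabla\xi\,dx+\int_{\mathbb{R}^d}\sigma|\nabla\xi|^2\,dx.
\end{equation*}
The generalised flux $\sigma\nabla u-\nabla f=(\sigma-1)\nabla f+\sigma\nabla\Phi$ lies in $L^2(\mathbb{R}^d)$ (its first summand is compactly supported in $A$, its second because $\nabla\Phi\in L^2$) and is divergence-free in distributions by $\nabla\cdot(\sigma\nabla u)=0$ and $\Delta f=0$. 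Integration by parts therefore annihilates the first-order term for every $\xi\in C_c^\infty(\mathbb{R}^d)$, and density of $C_c^\infty$ in the homogeneous Sobolev completion under $\|\nabla\cdot\|_{L^2}$ extends the vanishing to every admissible $\xi$. Hence $J[v]-J[u]=\int_{\mathbb{R}^d}\sigma|\nabla\xi|^2\geq 0$, and the choice $v=f$ (so $\xi=-\Phi$) yields $Q\leq\sum_k(\sigma_k-1)\int_{A_k}|\nabla f|^2$.

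The main technical delicacy I anticipate is this last density step, because $\Phi$ is only a distribution with square-integrable gradient (in two dimensions $\Phi$ is not even square integrable), so one cannot directly test the weak formulation of the transmission problem against it. Observing the $L^2$ regularity of the combined flux $\sigma\nabla u-\nabla f$ and invoking density of compactly supported tests in the homogeneous Sobolev completion resolves this, and the remainder is algebraic in the spirit of \cite[Theorem 4.1]{ADKL14}, extended from a single inhomogeneous inclusion to the present nested multi-layer configuration.
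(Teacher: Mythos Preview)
Your proposal is correct. The paper does not supply its own proof of this theorem, deferring instead to \cite[Theorem~4.1]{ADKL14}; your argument---specialising identity~(\ref{eq:315}) to the diagonal, completing squares for the lower bound, and invoking the Dirichlet variational principle for the upper bound---is precisely the route indicated there, carried out explicitly for the multi-layer setting.
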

The above theorem shows that if $\Gs_k-1>0$ for all $k =1,2,\ldots,N,$
then the GPTs are positive-definite, and they are negative-definite if $0<\Gs_k<1$ for all $k =1,2,\ldots,N.$

\section{Identification of location  for multi-layer structures}\label{sec2}

In this section, we shall consider the uniqueness in determining the location of multi-layer structures.
Let $A=\cup_{k=1}^NA_k$ denote the multi-layer structure that we are concerned with. It is assumed that $A$ is of the form
\begin{equation}\label{eq:form1}
A=  B + z,
\end{equation}
where $z\in\mathbb{R}^d$, $d=2$ or 3, and $B$ is a bounded domain containing the origin with  a $C^{1,\eta}$ smooth boundary $\widetilde{\Gamma}_1$, and  $B_0=\mathbb{R}^d\backslash\overline{B}$. The interior of $B$ is divided by means of closed and nonintersecting  $C^{1,\eta}$ surfaces $\widetilde{\Gamma}_k$ $(k = 2, 3,...,N)$  into subsets (layers)  $B_k$ $(k = 1, 2,...,N)$.  Each $\widetilde{\Gamma}_{k-1}$ surrounds $\widetilde{\Gamma}_k$ $(k=2,3,\ldots,N)$.  The regions $B_k$ $(k=1,2,\ldots,N)$ are homogeneous media.
Since $A= B + z$, for any $y\in \Gamma_i$, we let $\tilde{y}=(y-z)\in \widetilde{\Gamma}_i, i=1,2,\ldots,N$. Denote by $\widetilde{\varphi}(\widetilde{y})=\varphi(y)$ and $\widetilde{\psi}(\widetilde{y})=\psi(y)$, and let $\partial/\partial\widetilde{\nu_i}$ be the normal derivative on the boundary $\widetilde{\Gamma}_i$.

\begin{lem}
  Let $\phi_k\in L^2(\Gamma_k)$, $k=1,2,\ldots,N$. There hold
\begin{equation}\label{npae}
\Kcal^*_{\Gamma_k}[\phi_k](x)=\Kcal^*_{\widetilde{\Gamma}_k}[\widetilde\phi_k](\widetilde{x}),
\end{equation}	
and
\begin{equation}\label{dsae}
\frac{\p \Scal_{\Gamma_l}[\phi_l]}{\p \nu_k}=\frac{\p \Scal_{\widetilde{\Gamma}_l}[\widetilde\phi_l]}{\p \widetilde{\nu_k}}, \quad
\mbox{for}\quad l\neq k.
\end{equation}
\end{lem}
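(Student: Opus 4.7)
The plan is to prove both identities by a single change of variables driven by translation invariance. Because $A=B+z$ forces $\Gamma_k=\widetilde\Gamma_k+z$ for every $k$, each $x\in\Gamma_k$ (resp.\ $y\in\Gamma_l$) can be written as $x=\widetilde x+z$ (resp.\ $y=\widetilde y+z$), and by construction $\phi_k(y)=\widetilde\phi_k(\widetilde y)$. I would first write out each operator according to its integral definition and then substitute $y=\widetilde y+z$ in the integrand, converting an integral over $\Gamma_k$ (or $\Gamma_l$) into one over $\widetilde\Gamma_k$ (or $\widetilde\Gamma_l$).

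The argument rests on three elementary facts, each a direct consequence of rigid translation, which I would record at the outset: (i) $G(x-y)=G(\widetilde x-\widetilde y)$ since $G$ depends only on $|x-y|$, and hence $\nabla_x G(x-y)=\nabla_{\widetilde x}G(\widetilde x-\widetilde y)$; (ii) the outward unit normal at $x\in\Gamma_k$ equals the outward unit normal at $\widetilde x\in\widetilde\Gamma_k$, because translation preserves orientation and the nesting of the layers; (iii) the induced surface measure is translation invariant, so $ds(y)=ds(\widetilde y)$. Together with the relation $\phi_k(y)=\widetilde\phi_k(\widetilde y)$, these three observations match the integrands on the two sides of each proposed identity term for term.

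To establish \eqref{npae} I would write the $\nu_x$-version of \eqref{NPoperator}, namely $\Kcal^*_{\Gamma_k}[\phi_k](x)=\int_{\Gamma_k}\frac{\p G(x-y)}{\p\nu_k(x)}\phi_k(y)\,ds(y)$, substitute $y=\widetilde y+z$, and apply (i)--(iii) to read off $\Kcal^*_{\widetilde\Gamma_k}[\widetilde\phi_k](\widetilde x)$. For \eqref{dsae} I would start from $\frac{\p}{\p\nu_k}\Scal_{\Gamma_l}[\phi_l](x)=\nu_k(x)\cdot\int_{\Gamma_l}\nabla_x G(x-y)\phi_l(y)\,ds(y)$ and run the same substitution; the hypothesis $l\neq k$ only serves to ensure that $x\in\Gamma_k$ stays away from the integration surface $\Gamma_l$, so the integrand is smooth in $x$ and no jump-trace subtlety intervenes. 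There is no real obstacle here; the only care required is the bookkeeping of pairing each normal with the correct surface and confirming that the outward orientation transports consistently under the shift by $z$.
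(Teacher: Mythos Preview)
Your proposal is correct and follows essentially the same approach as the paper: the paper's proof likewise writes $\Kcal^*_{\Gamma_k}[\phi_k](x)=\nu_x\cdot\nabla_x\int_{\Gamma_k}G(x-y)\phi_k(y)\,ds(y)$, performs the change of variables $y=\widetilde y+z$, and reads off $\Kcal^*_{\widetilde\Gamma_k}[\widetilde\phi_k](\widetilde x)$, then notes that \eqref{dsae} is proved in a similar manner. Your explicit enumeration of the three translation-invariance facts (i)--(iii) is slightly more detailed than the paper's write-up, but the underlying argument is identical.
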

\begin{proof}[\bf Proof]
Let $x\in \Gamma_{k}$ and denote $\widetilde{x}=(x-z)$.
By using $y= \widetilde{y}+z$ and  change of variables in integrals, one has that
\[
\begin{aligned}
\Kcal^*_{\Gamma_k}[\phi_k](x
)&=\int_{\Gamma_k}\frac{\p G(x-y)}{\p \nu_x}\phi_k(y)~\mathrm{d} s(y)=\nu_x\cdot \nabla_x\int_{\Gamma_k}G(x-y)\phi_k(y)~\mathrm{d} s(y)\\
&=\nu_{\widetilde{x}}\cdot\nabla_{\widetilde{x}}\int_{\widetilde{\Gamma}_k}G( \widetilde{x}- \widetilde{y})\widetilde\phi_k(\widetilde y)~\mathrm{d} s(\widetilde y)\\
&=\Kcal^*_{\widetilde{\Gamma}_k}[\widetilde\phi_k](\widetilde x).
\end{aligned}
\]
Moreover, \eqref{dsae} can be proved in a similar manner. The proof is complete.
\end{proof}
Next,
by Taylor series expansion, the background field $H(y)$ has the following expansion
\begin{equation}\label{eq:direxpF01}
H(y)=H( \widetilde{y} + z) = H(z)+ \sum^{+\infty}_{|\beta|=1} \frac{1}{\beta!}  \widetilde{y}^{\beta} \partial^{\beta}H(z).
\end{equation}
Let $\widetilde\Phi_\beta=\left(\widetilde{\phi}_{1,\beta}, \widetilde{\phi}_{2,\beta},\ldots,\widetilde{\phi}_{N,\beta}\right)$ be the solution to the following equation
\[
\mathbb{J}_B^{\lambda}[\widetilde\Phi_\beta ]=
\left(\frac{\p}{\p\widetilde{\nu_1}}\widetilde{y}^{\beta},
\frac{\p}{\p\widetilde{\nu_2}}\widetilde{y}^{\beta},\ldots,\frac{\p}{\p\widetilde\nu_{N}}\widetilde{y}^{\beta}\right)^T,
\]
where
\begin{equation}\label{main_mat}
\begin{split}
\mathbb{J}_B^{\lambda}:=
\begin{bmatrix}
\lambda_{1}-\Kcal_{\widetilde{\Gamma}_1}^* & -\widetilde{\nu_1}\cdot\nabla\Scal_{\widetilde{\Gamma}_2} & \cdots & -\widetilde{\nu_1}\cdot\nabla\Scal_{\widetilde{\Gamma}_{N}} \\
-\widetilde{\nu_2}\cdot\nabla\Scal_{\widetilde{\Gamma}_1} &\lambda_2-\Kcal_{\widetilde{\Gamma}_2}^*  & \cdots & -\widetilde{\nu_2}\cdot\nabla\Scal_{\widetilde{\Gamma}_{N}}\\
\vdots & \vdots &\ddots &\vdots \\
-\widetilde{\nu_{N}}\cdot\nabla\Scal_{\widetilde{\Gamma}_1} & -\widetilde{\nu_{N}}\cdot\nabla\Scal_{\widetilde{\Gamma}_2} & \cdots & \lambda_{N}-\Kcal_{\widetilde{\Gamma}_{N}}^*
\end{bmatrix}.
\end{split}
\end{equation}
From the identities \eqref{npae} and \eqref{dsae}, the linearity of the equation \eqref{eq:integralrep} and together with the help of the following relationship
$$
\frac{\partial}{\partial \nu}H(y)=\frac{\partial}{\partial \widetilde{\nu}}\sum^{+\infty}_{|\beta|=1} \frac{1}{\beta!}  \widetilde{y}^{\beta} \partial^{\beta}H(z),
$$
one can conclude that $\widetilde{\phi}_{k}$, $k=1,2,\ldots,N$, with the following expression
\begin{equation}\label{eq:phipsi01}
\widetilde{\phi}_k=\sum_{|\beta|=1}^{+\infty} \frac{1}{\beta!} \widetilde{\phi}_{k,\beta}\partial^{\beta}H(z),
\end{equation}
is the solution of \eqref{eq:integralrep}. Therefore from \eqref{eq:general_solution}, we have the following expansion for the  perturbed electric potential $u-H$,
\begin{equation}\label{eq:farexp01}
u(x)-H(x) = \sum_{k=1}^{N}\sum^{+\infty}_{|\alpha|=1} \sum^{+\infty}_{|\beta|=1}  \frac{(-1)^{|\alpha|}}{\alpha!\beta!}\partial^{\alpha} G(x-z)\partial^{\beta} H(z)\int_{\widetilde{\Gamma}_k} \widetilde{y}^{\alpha} \widetilde{\phi}_{k,\beta}(\widetilde{y})~\mathrm{d}s(\widetilde{y}).
\end{equation}
Then  we can obtain the following result.

\begin{thm}\label{th:farfexp01}
Let $u(x)$ be the solution to the problem \eqref{eq:mainmd02} with the conductivity $\sigma $ given by \eqref{eq:paracho01} and the transmission conditions given by \eqref{eq:transmscds}. Then there holds
\begin{equation}\label{eq:lemag0101}
u(x)-H(x) = \sum^{+\infty}_{|\alpha|=1} \sum^{+\infty}_{|\beta|=1}  \frac{(-1)^{|\alpha|}}{\alpha!\beta!}\partial^{\alpha} G(x-z)\widetilde M_{\alpha\beta}\partial^{\beta} H(z),
\end{equation}
where $\widetilde M_{\alpha\beta}$ is defined  in \eqref{eq:def_M2} with the integral surfaces replaced by $\widetilde{\Gamma}_k$. Correspondingly, the first-order polarization tensor is  $\widetilde{\mathbf{M}} =(\widetilde M_{ij})_{i,j=1}^{d}.$
\end{thm}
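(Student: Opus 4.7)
The statement is essentially a book-keeping consequence of results already established in the excerpt, so my plan is to combine the integral representation from Theorem~\ref{repsolunq} with two Taylor expansions (one for $G$, one for $H$) and then recognize the GPT coefficients defined in Section~\ref{sec:03}.

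First, I would apply Theorem~\ref{repsolunq} to write
\begin{equation*}
u(x)-H(x)=\sum_{k=1}^{N}\Scal_{\Gamma_k}[\phi_k](x)=\sum_{k=1}^{N}\int_{\Gamma_k}G(x-y)\phi_k(y)\,\mathrm{d}s(y),
\end{equation*}
and then perform the change of variables $y=\widetilde{y}+z$ in each boundary integral, so that $G(x-y)=G((x-z)-\widetilde{y})$ and $\phi_k(y)=\widetilde{\phi}_k(\widetilde{y})$ on $\widetilde{\Gamma}_k$. This moves the evaluation point to $x-z$, which is the natural base point for the translated obstacle $B$.

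Second, I would Taylor-expand $G((x-z)-\widetilde{y})$ in the second argument around $\widetilde{y}=0$, giving
\begin{equation*}
G((x-z)-\widetilde{y})=\sum_{|\alpha|=0}^{+\infty}\frac{(-1)^{|\alpha|}}{\alpha!}\widetilde{y}^{\alpha}\,\partial^{\alpha}G(x-z),
\end{equation*}
valid for $x$ sufficiently far from $z$ since $B$ is bounded. Substituting this into the integral representation and exchanging sum with integral yields
\begin{equation*}
u(x)-H(x)=\sum_{k=1}^{N}\sum_{|\alpha|=0}^{+\infty}\frac{(-1)^{|\alpha|}}{\alpha!}\partial^{\alpha}G(x-z)\int_{\widetilde{\Gamma}_k}\widetilde{y}^{\alpha}\widetilde{\phi}_k(\widetilde{y})\,\mathrm{d}s(\widetilde{y}).
\end{equation*}
The $|\alpha|=0$ term drops out because $\widetilde{\phi}_k\in L_0^2(\widetilde{\Gamma}_k)$ by Theorem~\ref{repsolunq} (the translated densities inherit the mean-zero property).

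Third, I would insert the expansion \eqref{eq:phipsi01} of $\widetilde{\phi}_k$ into this formula, interchange the two summations, and use the definition \eqref{eq:def_M2} of the GPTs applied on the surfaces $\widetilde{\Gamma}_k$ to identify
\begin{equation*}
\sum_{k=1}^{N}\int_{\widetilde{\Gamma}_k}\widetilde{y}^{\alpha}\widetilde{\phi}_{k,\beta}(\widetilde{y})\,\mathrm{d}s(\widetilde{y})=\widetilde{M}_{\alpha\beta}.
\end{equation*}
Collecting the coefficients of $\partial^{\alpha}G(x-z)\partial^{\beta}H(z)$ gives \eqref{eq:lemag0101}. The only nontrivial point is justifying the two interchanges of sum and integral; this follows from the uniform convergence of the Taylor series for $G$ on compact subsets of $\{x:|x-z|>\mathrm{diam}(B)\}$ and the boundedness of the polarization densities $\widetilde{\phi}_{k,\beta}$ in $L^2(\widetilde{\Gamma}_k)$ provided by the invertibility established in Theorem~\ref{repsolunq} together with the spectral bound of Lemma~\ref{SPNP}. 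I do not foresee a serious obstacle beyond this routine convergence check, since the algebraic content of the theorem is already encoded in the intermediate identity \eqref{eq:farexp01} displayed in the excerpt.
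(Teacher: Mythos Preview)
Your proposal is correct and follows essentially the same route as the paper: the paper's argument is actually contained in the paragraphs preceding the theorem (culminating in \eqref{eq:farexp01}), where it changes variables to $\widetilde{\Gamma}_k$, Taylor-expands $H$ about $z$ to obtain \eqref{eq:phipsi01}, expands $G(x-y)$ about $x-z$, and then identifies the GPT coefficients. Your write-up is in fact slightly more explicit than the paper's, noting the vanishing of the $|\alpha|=0$ term via the mean-zero property and commenting on the convergence justification, neither of which the paper spells out.
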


\subsection{Uniqueness of the location for multi-layer structures}
We are in a position to present the unique recovery results in
locating the multi-layer structure. In what follows, we let $A^{(1)}=\cup_{k=1}^NA_k^{(1)}$ and $A^{(2)}=\cup_{k=1}^NA_k^{(2)}$,  be two $N$-layer structure, which satisfy \eqref{eq:form1} with $z$ replaced by $z^{(1)}$ and $z^{(2)}$, respectively. Correspondingly, the material parameter
$
\sigma_k
$, $k=1,2,\ldots,N$,
is replaced by
$
\sigma^{(1)}_k
$
and
$
\sigma^{(2)}_k,
$
respectively, for $A^{(1)}$ and $A^{(2)}$. Let $u_j$, $j=1, 2$, be the solutions to \eqref{eq:mainmd02} with $A$ replaced by $A^{(1)}$ and $A^{(2)}$, respectively. Denote by $\widetilde{\mathbf{M}}_{1}$, $\widetilde{\mathbf{M}}_{2}$ the  first-order polarization tensors for $A^{(1)}$ and $A^{(2)}$, respectively.
\begin{thm}\label{th:02}
	Let $\Omega$ be a bounded domain enclosing $A^{(1)}\cup A^{(2)}$.  Suppose that $\nabla H(x)\neq 0$ for all $x\in \Omega$, and either  $\widetilde{\mathbf{M}}_{1}$ or $\widetilde{\mathbf{M}}_{2}$ is  nonsingular. If
	\begin{equation}\label{measure}
	u_1=u_2 \ \ \mbox{on} \ \ \Pi,
	\end{equation}	
	then
	\[
	z^{(1)}=z^{(2)},
	\] where $\Pi$ is an open subset of $\p \Omega$.
\end{thm}
\begin{proof}[\bf Proof]
	Since $u_1$ and $u_2$ are harmonic in $\RR^d\setminus\overline{\Omega}$ ($d=2,3$), by using \eqref{measure} and unique continuation, one has that
	\[
	u_1=u_2 \ \ \mbox{in} \ \ \RR^d\setminus\overline{\Omega}.
	\]
	Then from Theorem \ref{th:farfexp01}, there holds that, for $x\in\RR^d\setminus \overline{\Omega}$,
	\[
	u_j(x)=
	H(x)-\nabla G(x-z^{(j)})^T  \widetilde{\mathbf{M}}_{j}\nabla H(z^{(j)})+\mathcal{O}\(\frac{1}{|x-z^{(j)}|^d}\), \quad j=1,2,
	\]
	which implies that
	\[
	  \nabla G(x-z^{(1)})^T  \widetilde{\mathbf{M}}_{1}\nabla H(z^{(1)})
	-
	\nabla G(x-z^{(2)})^T  \widetilde{\mathbf{M}}_{2}\nabla H(z^{(2)}) =0 \ \ \mbox{in} \ \ \RR^d\setminus\overline{\Omega}.
	\]
	By straightforward calculations, one can further show that
	\begin{equation}\label{eq:reviseadd01}
	\begin{split}
	F(x):=&\(\nabla G(x-z^{(1)})-\nabla G(x-z^{(2)})\)^T\widetilde{\mathbf{M}}_{1}\nabla H(z^{(1)})\\
	&-\nabla G(x-z^{(2)})^T\(\widetilde{\mathbf{M}}_{2}\nabla H(z^{(2)})-\widetilde{\mathbf{M}}_{1}\nabla H(z^{(1)}\)\\
	=&\(\nabla^2 G(x-z')(z^{(1)}-z^{(2)})\)^T\widetilde{\mathbf{M}}_{1}\nabla H(z^{(1)})\\
	&-\nabla G(x-z^{(2)})^T\(\widetilde{\mathbf{M}}_{2}\nabla H(z^{(2)})-\widetilde{\mathbf{M}}_{1}\nabla H(z^{(1)}\)=0
	\end{split}
	\end{equation}
	holds in $\RR^d\setminus\overline{\Omega}$, where $z'=z^{(1)}+t'z^{(2)}$ with $t'\in (0, 1)$. Note that $F(x)$ defined in \eqref{eq:reviseadd01} is also harmonic in $\RR^d\setminus(z^{(1)}\cup z^{(2)})$. By using the analytic continuation of harmonic functions, one thus has that $F(x)\equiv 0$ in $\RR^d$. Define $F:=F_1+F_2$, where
	\begin{equation}\label{equf1}
		F_1(x):=\(\nabla^2 G(x-z')(z^{(1)}-z^{(2)})\)^T\widetilde{\mathbf{M}}_{1}\nabla H(z^{(1)}),
	\end{equation}
	and
	\[
	F_2(x):=-\nabla G(x-z^{(2)})^T\(\widetilde{\mathbf{M}}_{2}\nabla H(z^{(2)})-\widetilde{\mathbf{M}}_{1}\nabla H(z^{(1)}\).
	\]
	Then by comparing the types of poles of $F_1$ and $F_2$, one immediately finds that $F_1=0$ and $F_2=0$ in $\RR^d$.
	If $\nabla H(x)\neq 0$ for all $x\in \Omega$ and  $\widetilde{\mathbf{M}}_{1}$  is  nonsingular, it follows from $F_1=0$ that 	
	\[
	z^{(1)}-z^{(2)}=0.
	\]
	On the other hand, 	similarly to \eqref{eq:reviseadd01}, we can obtain
	\[
	\(\nabla^2 G(x-z')(z^{(1)}-z^{(2)})\)^T\widetilde{\mathbf{M}}_{2}\nabla H(z^{(2)})=0.
	\]
	If $\nabla H(x)\neq 0$ for all $x\in \Omega$ and  $\widetilde{\mathbf{M}}_{2}$  is  nonsingular, it also follows that 	
	$
	z^{(1)}-z^{(2)}=0.
	$
	The proof is complete.
\end{proof}
\begin{rem}
We remark that the assumption $\nabla H(x)\neq 0$ for all $x\in \Omega$, which has  been used as a crucial condition in \cite[Page 183]{HK07:book} to establish the results therein, is also a requirement in Theorem \ref{th:02}. Moreover, the first-order polarization tensor may be singular, if we only have the elliptic assumption on the conductivities $\sigma_k$, $k = 1,2,\ldots,N.$  In this regards, we refer to \cite{KLSIAM2019,KLSAA2020} for construction of the polarization tensor vanishing structure to achieve weakly neutral inclusions.
We would like to emphasize that the uniqueness result of Theorem \ref{th:02} also holds if we assume that $\Gs_k-1>0$ or $\Gs_k-1<0$ for all $k =1,2,\ldots,N$, and $\nabla H(x)\neq 0$ for all $x\in \Omega$.
 This, together with the fact that $\widetilde{\mathbf{M}}_{1}$ is a nonsingular (actually positive- or negative--definite) matrix (see, Theorem \ref{Positivedef}),  implies that
$z^{(1)}-z^{(2)}=0.$
\end{rem}

\section{Reconstruction of the conductivity distribution for multi-layer concentric disks}\label{sec4}
For multi-layer structure, we are mainly concerned with the following inverse conductivity problem:
\[
\left.(u, H)\right|_{x\in \Pi}\longrightarrow \bigcup_{k=1}^N\(A_k;\sigma_k,\Gamma_k\),
\]
where $\Pi$ is an open surface outside the multi-layer structure. We shall only consider the two dimensional case.

In what follows, for later usage, we introduce some notions on the measurements.
\begin{defn}
	\label{th:expan2} Let $H$ be a harmonic function in $\RR^2$, which admits the following expansion
	\begin{equation}\label{Hexp}
	H(x) = H(0)+\sum_{n=1}^\infty r^n \bigr(a_n^c\cos n\theta + a_n^s\sin n\theta \bigr).
	\end{equation}
	We call $H$ is of \textit{full-order}, if the expansion (\ref{Hexp}) hold such that
	\[
		a_n^c\neq 0, \quad a_n^s\neq 0, \mbox{ for all } n\in \mathbb{N}.
	\]
	Otherwise $H$ is of \textit{partial-order}. Furthermore, in (\ref{eq:mainmd02}), if $H$ is of \textit{full-order}, then we call
	the inverse conductivity problem has \textit{full-order} measurement. Otherwise it has  \textit{partial-order} measurement.
\end{defn}

We mention that lots of harmonic functions can be of $\textit{full-order}$. For example, consider a complex valued function
$f(z)=e^z$, where $z=x+\rm{i}y$ with $\rm{i}$ the imaginary unit, that is $\rm{i}^2=-1$. It is readily seen, by Taylor expansion, that any nontrivial combination of
real part and imaginary part of $f(z)$ is of \textit{full-order} measurement.

In order to reconstruct the conductivity distribution for multi-layer structure  by using \emph{partial-order} measurement,
we next seek an expression of the multipolar expansion in $\mathbb{R}^2$ which is slightly different from \eqref{FFEPE}.  For multi-indices $\alpha\in \mathbb{N}^2$,
define $a_{\alpha}^c$ and $a_{\alpha}^s$ by
\[
\sum_{|\alpha|=n}a_{\alpha}^c x^{\alpha} = r^n\cos n\theta \quad \mbox{ and }\quad\sum_{|\alpha|=n}a_{\alpha}^s x^{\alpha} = r^n\sin n\theta,
\]
and define the contracted GPTs of multi-layer structures
\begin{align}
\ds  M_{mn}^{cc}:=\sum_{|\alpha|=m} \sum_{|\beta|=n}a_{\alpha}^c a_{\beta}^c M_{\alpha\beta}, \label{defmcc1}\\
\ds  M_{mn}^{cs}:=\sum_{|\alpha|=m} \sum_{|\beta|=n}a_{\alpha}^c a_{\beta}^s M_{\alpha\beta}, \\
\ds  M_{mn}^{sc}:=\sum_{|\alpha|=m} \sum_{|\beta|=n}a_{\alpha}^s a_{\beta}^c M_{\alpha\beta}, \\
\ds  M_{mn}^{ss}:=\sum_{|\alpha|=m} \sum_{|\beta|=n}a_{\alpha}^s a_{\beta}^s M_{\alpha\beta}. \label{defmss1}
\end{align}
Note that $G(x-y)$ admits the expansion
\begin{equation}\label{Gammaexp}
G(x-y)
=\sum_{n=1}^{\infty}\frac{-1}{2\pi n}\left[\frac{\cos n\theta_x}{r_x^n}r_y^n\cos n\theta_y
+\frac{\sin n\theta_x}{r_x^n}r_y^n\sin n\theta_y\right]+ C,
\end{equation}
where $C$ is a constant, $x=r_x(\cos\theta_x,\sin\theta_x)$ and
$y=r_y(\cos\theta_y,\sin\theta_y)$. Expansion \eqref{Gammaexp} is
valid if $|x| \to +\infty$ and $y \in \Gamma_k$.

From \eqref{rep_solu} and \eqref{Gammaexp}, we get the
following theorem.
\begin{thm}\label{th:expan3}
	Let $u$ be the solution to \eqref{eq:mainmd02}  in $\mathbb{R}^2$ with the conductivity $\sigma $ given by \eqref{eq:paracho01} and the transmission conditions given by \eqref{eq:transmscds}. If $H$ admits the expansion
	\begin{equation}\label{Hexp1}
	H(x) = H(0)+\sum_{n=1}^\infty r^n \bigr(a_n^c\cos n\theta + a_n^s\sin n\theta \bigr)
	\end{equation}
	with $x=(r\cos\theta, r\sin\theta)$,
	then we have
	\begin{align}
	(u-H)(x) & = -\sum_{m=1}^\infty\frac{\cos m\theta}{2\pi mr^m}\sum_{n=1}^\infty
	\( M_{mn}^{cc}a_n^c + M_{mn}^{cs}a_n^s \) \nonumber \\
	& \quad -\sum_{m=1}^\infty\frac{\sin m\theta}{2\pi m
		r^m}\sum_{n=1}^\infty \bigr( M_{mn}^{sc}a_n^c + M_{mn}^{ss}a_n^s
	\bigr), \label{expan3}
	\end{align}
	which holds uniformly as $|x| \to +\infty$.
\end{thm}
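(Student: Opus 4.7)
\medskip
\noindent\textbf{Proof proposal.}
The plan is to substitute the representation \eqref{rep_solu} into $u-H$, expand the single-layer kernel $G(x-y)$ via \eqref{Gammaexp} for $|x|\to\infty$ and $y\in\Gamma_k$, and then reorganize the resulting moments of $\phi_k$ into the contracted GPTs defined by \eqref{defmcc1}--\eqref{defmss1}. First I would write
\[
(u-H)(x)=\sum_{k=1}^{N}\int_{\Gamma_k}G(x-y)\phi_k(y)\,\mathrm{d}s(y),
\]
insert \eqref{Gammaexp}, and observe that the constant term drops out because $\phi_k\in L_0^2(\Gamma_k)$ (Theorem \ref{repsolunq}). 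This already yields the desired angular/radial factors $\cos m\theta/r^m$ and $\sin m\theta/r^m$ in front of integrals of the form $\int_{\Gamma_k}r_y^m\cos m\theta_y\,\phi_k(y)\,\mathrm{d}s(y)$ and its $\sin$ analogue.

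The next step is to identify these integrals with the contracted GPTs. Since $r^m\cos m\theta=\sum_{|\alpha|=m}a_\alpha^c y^\alpha$ and $r^m\sin m\theta=\sum_{|\alpha|=m}a_\alpha^s y^\alpha$, I would decompose $\phi_k$ by linearity of the system \eqref{solu2}. Specifically, expanding $H$ as in \eqref{Hexp1} gives
\[
\left.\frac{\partial H}{\partial \nu_k}\right|_{\Gamma_k}
=\sum_{n=1}^{\infty}\Bigl(a_n^c\,\frac{\partial (r^n\cos n\theta)}{\partial \nu_k}+a_n^s\,\frac{\partial(r^n\sin n\theta)}{\partial \nu_k}\Bigr)\Big|_{\Gamma_k}
=\sum_{n=1}^{\infty}\sum_{|\beta|=n}\Bigl(a_n^c a_\beta^c+a_n^s a_\beta^s\Bigr)\left.\frac{\partial y^\beta}{\partial \nu_k}\right|_{\Gamma_k}.
\]
Invoking the uniqueness in Theorem \ref{repsolunq} together with the linearity of \eqref{solu2}--\eqref{solu3}, I conclude that
\[
\phi_k=\sum_{n=1}^{\infty}\sum_{|\beta|=n}\bigl(a_n^c a_\beta^c+a_n^s a_\beta^s\bigr)\phi_{k,\beta}.
\]

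Plugging this into the moment integrals and using \eqref{eq:def_M2} gives, after summing over $k$,
\[
\sum_{k=1}^{N}\int_{\Gamma_k}r_y^m\cos m\theta_y\,\phi_k(y)\,\mathrm{d}s(y)
=\sum_{n=1}^{\infty}\Bigl(a_n^c\sum_{|\alpha|=m,|\beta|=n}a_\alpha^c a_\beta^c M_{\alpha\beta}+a_n^s\sum_{|\alpha|=m,|\beta|=n}a_\alpha^c a_\beta^s M_{\alpha\beta}\Bigr)
=\sum_{n=1}^{\infty}\bigl(M_{mn}^{cc}a_n^c+M_{mn}^{cs}a_n^s\bigr),
\]
and analogously with $\cos m\theta_y$ replaced by $\sin m\theta_y$, which produces $M_{mn}^{sc}$ and $M_{mn}^{ss}$. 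Combining these with the angular prefactors from \eqref{Gammaexp} yields \eqref{expan3}.

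The routine part is the bookkeeping of the double series; the only real obstacle is justifying the term-by-term manipulations, i.e.\ that the series expansion of $G(x-y)$ converges uniformly for $y$ in the bounded region $\overline{A}$ once $|x|$ exceeds $\mathrm{diam}(A)+\epsilon$, and that this uniformity allows us to interchange the infinite sum in \eqref{Gammaexp} with the boundary integrals and with the decomposition of $\phi_k$. This follows from the geometric decay $r_y^n/r_x^n\le(\mathrm{diam}(A)/|x|)^n$ together with the $L^2$-boundedness of the inverse of $\mathbb{I}^{\lambda}-\mathbb{K}_A^*$ on $\mathcal{H}_0$ (Lemma \ref{SPNP} and Theorem \ref{repsolunq}), which controls $\|\phi_{k,\beta}\|_{L^2(\Gamma_k)}$ in terms of the boundary data $\partial y^\beta/\partial\nu_k$. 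Once this uniform convergence is in place, the identification with \eqref{expan3} is purely algebraic.
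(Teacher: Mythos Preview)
Your proposal is correct and follows exactly the route the paper indicates: the paper's entire argument for this theorem is the single sentence ``From \eqref{rep_solu} and \eqref{Gammaexp}, we get the following theorem,'' and your proof simply fills in the bookkeeping that this sentence suppresses---expanding $G(x-y)$ via \eqref{Gammaexp}, dropping the constant by the mean-zero property of $\phi_k$, decomposing $\phi_k$ linearly as in \eqref{eq:phipsi01}, and matching the resulting moments with the definitions \eqref{defmcc1}--\eqref{defmss1}.
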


The  CGPTs \eqref{defmcc1}--\eqref{defmss1} involving geometric and material configurations of multi-layer structure play an important role in reconstructing conductivity distributions. Unfortunately for general shape they are coupled together and difficult to decouple.
It is proved in \cite{ADKL14} that the full set of harmonic combinations  of CGPTs associated with an inhomogeneous inclusion determines the Newmann-to-Dirichlet map on the boundary of the inclusion. Then  uniqueness results of the Calder\'on  problems hold for conductivities in $L^{\infty}$ (see \cite{APam2006}).
Motivated by the above facts and results, in the remainder of this section, we shall consider the uniqueness recovery of conductivity distribution for multi-layer concentric disks  by using \emph{partial-order} measurement. We also want to remark that in the multi-layer concentric disks case, by using \emph{full-order} measurement, one can readily recover all the CGPTs and thus the multi-layer structure.

We suppose that $A$ is a multi-layer concentric disks in $\RR^2$. Precisely, we give a sequence of layers, $A_0,A_1,\ldots,A_{N}$, by
\begin{equation}\label{eq:aj}
A_{0}:=\{r>r_{1}\}, \quad A_k:=\{r_{k+1}<r\leqslant r_{k}\}, \quad  k=1,2,\ldots, N-1, \quad A_N:=\{r\leqslant r_N\},
\end{equation}
and the interfaces between the adjacent layers can be rewrite by
\begin{equation}\label{interface}
\Gamma_k:=\left\{|x|=r_k\right\}, \quad  k=1,2,\ldots,N,
\end{equation}
where $N\in \mathbb{N}$ and $r_k\in\mathbb{R}_+$.

\subsection{Explicit formulae for the polarization tensors of multi-layer concentric disks}

In this subsection, we explicitly compute the solution $\phi_k$ of the integral equation \eqref{solu2} in the case where the inclusion $A$ is $N$-layer concentric disk.

 Let $\Gamma_0=\{ |x|=r_0\}$. For each integer $n$, one can easily see that (cf. \cite{ACKLM1})
\begin{equation} \label{Single_circular}
\Scal_{\Gamma_0}[e^{\mathrm{i} n\theta}](x) = \begin{cases}
\ds - \frac{r_0}{2|n|} \left(\frac{r}{r_0}\right)^{|n|}
e^{\mathrm{i} n\theta} \quad & \mbox{if } |x|=r < r_0, \\
\nm
\ds - \frac{r_0}{2|n|} \left(\frac{r_0}{r}\right)^{|n|}
e^{\mathrm{i} n\theta} \quad & \mbox{if } |x|=r > r_0,
\end{cases}
\end{equation}
and hence
\begin{equation}\label{single-cir-nor}
\frac{\p }{\p r} \Scal_{\Gamma_0}[e^{\mathrm{i} n\theta}](x) =  \begin{cases}
\ds - \frac{1}{2} \left(\frac{r}{r_0}\right)^{|n|-1}
e^{\mathrm{i} n\theta} \quad & \mbox{if } |x|=r < r_0, \\
\nm
\ds \frac{1}{2} \left(\frac{r_0}{r}\right)^{|n|+1}
e^{\mathrm{i} n\theta} \quad & \mbox{if } |x|=r > r_0.
\end{cases}
\end{equation}
It then follows from \eqref{eq:trace} that
\begin{equation}\label{kcal-circ}
\Kcal_{\Gamma_0}^* [e^{\mathrm{i} n\theta}] =0 \quad \forall n \neq 0.
\end{equation}

Our main result in this subsection is the following.

\begin{thm}\label{thm52}
Let the multi-layer concentric disks $A=\cup_{k=1}^NA_k$ be given by \eqref{eq:aj}--\eqref{interface}. Assume that the background electrical potential $H$ can be represented as
\begin{equation}\label{eq:defH01}
H= \sum_{n=1}^{+\infty} a_{n}r^n e^{\mathrm{i} n\theta},
\end{equation}
where $a_n,$ $(n = 1, 2,\ldots),$ are arbitrary constants.
Then, the solution of \eqref{solu2} is given by
\begin{equation}\label{PHINK1}
\phi_k=2\sum_{n=1}^{+\infty}na_{n}e^{\mathrm{i} n\theta}\bm{e}_k^T\(\mathbb{E}^{(n)}_N\)^{-1}\bm{e},
\end{equation}
where
\[
\begin{split}
\mathbb{E}_N^{(n)}:=
\begin{bmatrix}
2\lambda_{1}r_1^{1-n} & -  r_2^{n+1}r_1^{-2n} & -r_3^{n+1}r_1^{-2n}& \cdots & - r_{N-1}^{n+1}r_1^{-2n}& -r_N^{n+1}r_1^{-2n} \\
r_1^{1-n} & 2\lambda_{2}r_2^{1-n}&- r_3^{n+1}r_2^{-2n}& \cdots & - r_{N-1}^{n+1}r_2^{-2n}& -r_N^{n+1}r_2^{-2n}\\
r_1^{1-n} &   r_2^{1-n}& 2\lambda_{3}r_3^{1-n} & \cdots & - r_{N-1}^{n+1}r_3^{-2n}& -r_N^{n+1}r_3^{-2n}\\
\vdots & \vdots &\vdots &\ddots& \vdots &\vdots \\
r_1^{1-n} &  r_2^{1-n}&r_3^{1-n} & \cdots &2\lambda_{N-1}r_{N-1}^{1-n}&-r_N^{n+1}r_{N-1}^{-2n}\\
r_1^{1-n} &  r_2^{1-n}&r_3^{1-n}& \cdots & r_{N-1}^{1-n}& 2\lambda_{N}r_N^{1-n}
\end{bmatrix}.
\end{split}
\]
\end{thm}
\begin{proof}[\bf Proof]
Because of \eqref{kcal-circ} it follows that
\[
\begin{split}
\mathbb{K}_A^*:=
\begin{bmatrix}
0 & \nu_1\cdot\nabla\Scal_{\Gamma_2} & \cdots & \nu_1\cdot\nabla\Scal_{\Gamma_{N}} \\
\nu_2\cdot\nabla\Scal_{\Gamma_1} & 0 & \cdots & \nu_2\cdot\nabla\Scal_{\Gamma_{N}}\\
\vdots & \vdots &\ddots &\vdots \\
\nu_{N}\cdot\nabla\Scal_{\Gamma_1} & \nu_{N}\cdot\nabla\Scal_{\Gamma_2} & \cdots & 0
\end{bmatrix}.
\end{split}
\]
From \eqref{single-cir-nor}, if $\bm\phi$ is given by
\begin{equation}\label{PHI}
\bm\phi = \sum_{n=1}^{+\infty} (\phi^n_1e^{\mathrm{i} n\theta},\phi^n_2e^{\mathrm{i} n\theta},\ldots,\phi^n_Ne^{\mathrm{i} n\theta})^T,
\end{equation}
then the integral equations \eqref{eq:integralrep} are equivalent to
%\[\mathbb{J}_A^{\lambda}\Phi=na_{0,n}\cos n\theta(r_1^{n-1} ,r_2^{n-1},\ldots,r_N^{n-1})^T\]
\[
\left\{
\begin{aligned}
& \lambda_{1} \phi^n_1-\frac{1}{2}\sum_{k=2}^{N}\phi^n_k\(\frac{r_k}{r_1}\)^{n+1}=na_{n}r_1^{n-1},\\
&\frac{1}{2}\sum_{k=1}^{l-1}\phi^n_k\(\frac{r_l}{r_k}\)^{n-1}+\lambda_{l}\phi^n_l-\frac{1}{2}\sum_{k=l+1}^{N}\phi^n_k\(\frac{r_k}{r_l}\)^{n+1}=na_{n}r_l^{n-1}, l=2,3,\ldots,N-1,\\
&\frac{1}{2}\sum_{k=1}^{N-1}\phi^n_k\(\frac{r_N}{r_k}\)^{n-1}+\lambda_{N}\phi^n_N=na_{n}r_N^{n-1}.
\end{aligned}
\right.
\]
It follows that
\[
\left\{
\begin{aligned}
&2 \lambda_{1} \phi^n_1 r_1^{1-n}-{r_1}^{-2n}\sum_{k=2}^{N}\phi^n_k{r_k}^{n+1}=2na_{n},\\
&\sum_{k=1}^{l-1}\phi^n_k r_k^{1-n} +2\lambda_{l}\phi^n_l r_l^{1-n}-{r_l}^{-2n}\sum_{k=l+1}^{N}\phi^n_k{r_k}^{n+1}=2na_{n}, l=2,3,\ldots,N-1,\\
&\sum_{k=1}^{N-1}\phi^n_k r_k^{1-n} +2\lambda_{N}\phi^n_N r_N^{1-n}=2na_{n}.
\end{aligned}
\right.
\]
Therefore,
we can obtain that
\[
\mathbb{E}_N^{(n)}\((\phi^n_1,\phi^n_2,\ldots,\phi^n_N)^T\)=2na_{n}\bm{e},
\]
where $\bm{e}:=(1,1,\ldots,1)^T$. It is clear that the invertibility of the  matrix $\mathbb{E}_N^{(n)}$ is equivalent to the well-posedness of the conductivity problem \eqref{eq:mainmd02} with all the material parameters $\sigma_k$, $k=1,2,\ldots, N$ being positive.
Thus, we can deduce that
\begin{equation}\label{PHINK}
\phi_k=2\sum_{n=1}^{+\infty}na_{n}e^{\mathrm{i} n\theta}\bm{e}_k^T\(\mathbb{E}^{(n)}_N\)^{-1}\bm{e}.
\end{equation}
The proof is complete.
\end{proof}

As an immediate application of the above theorem we obtain the following explicit
form of the perturbed electric potential in entire two-dimensional space $\RR^2$ in terms of the \emph{generalized polarization matrix}.
\begin{thm}\label{th:solmain01}
	Let $A=\cup_{k=1}^{N}A_k$ be the multi-layer concentric disk given by \eqref{eq:aj}.	Suppose $u$ is the solution to \eqref{eq:mainmd02} with the conductivity $\sigma $ given by \eqref{eq:paracho01} and the transmission conditions given by \eqref{eq:transmscds}. Let $H$ be given by \eqref{eq:defH01}. Define the   $n$-order generalized polarization matrix (GPM) $\mathbb{M}_N^{(n)}$ as follows:
	\begin{equation}\label{eq:matP01}
	\mathbb{M}_N^{(n)}:= \begin{bmatrix}
	-2\Gl_1 & (r_{2}/r_1)^{2n} & (r_{3}/r_1)^{2n} & \cdots& (r_{N-1}/r_1)^{2n} &(r_{N}/r_1)^{2n} \\
	-1 & -2\Gl_{2} & (r_{3}/r_2)^{2n} &\cdots& (r_{N-1}/r_2)^{2n}  & (r_{N}/r_2)^{2n} \\
	-1 & -1 & -2\Gl_{3} &\cdots& (r_{N-1}/r_3)^{2n}  & (r_{N}/r_3)^{2n} \\
	\vdots & \vdots & \vdots & \ddots & \vdots& \vdots\\
	-1 & -1 & -1 & \cdots & -2\Gl_{N-1}&(r_{N}/r_{N-1})^{2n} \\
	-1 & -1& -1 &\cdots & -1&-2\Gl_{N}
	\end{bmatrix}.
	\end{equation}
	Then $\mathbb{M}_N^{(n)}$ is invertible, and the transmission problem \eqref{eq:mainmd02} is uniquely solvable with the solution given by the following formula:
	\begin{equation}\label{eq:purbmn01}
	u-H=\sum_{n=1}^{+\infty}a_{n}e^{\mathrm{i} n\theta}\(r^{n}\bm{e}_{1:l}^T
	+\frac{1}{r^{n}}\bm{e}_{l+1:N}^T
	\)\Upsilon_{N,l}^{(n)}  (\mathbb M_{N}^{(n)})^{-1}\bm{e} \quad \mbox{in } A_l,\; l =0,1,\ldots,N,
	\end{equation}
	where $\bm{e}_{i:j} = \sum_{k=i}^{j}\bm{e}_k$ for $i\leqslant j$, and set $\bm{e}_{i:j} =0$ for $i>j$, and
	\begin{equation}\label{eq:matU01}
	\Upsilon_{N,l}^{(n)}:= \begin{bmatrix}
	1&\cdots & 0 & 0 &\cdots& 0 \\
	\vdots&\ddots & \vdots & \vdots &\ddots & \vdots\\
	0&\cdots & 1 & 0 &\cdots& 0 \\
	0&\cdots & 0 & r_{l+1}^{2n} &\cdots& 0\\
	\vdots & \ddots & \vdots & \vdots& \ddots & \vdots\\
	0&\cdots & 0 &
	0&\cdots& r_{N}^{2n}\\
	\end{bmatrix}.
	\end{equation}
\end{thm}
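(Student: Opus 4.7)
The plan is to combine the explicit formula \eqref{PHINK1} for the densities $\phi_k$ (derived in the preceding theorem) with the exterior trace formula \eqref{Single circular} for the single layer potential on a circle, and then recast the resulting expression in terms of $\mathbb{M}_N^{(n)}$ via a diagonal rescaling of $\mathbb{E}_N^{(n)}$. The unique solvability of the transmission problem is already supplied by Theorem \ref{repsolunq}, so it only remains to verify the series representation and the invertibility of $\mathbb{M}_N^{(n)}$.

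First I would substitute \eqref{PHINK1} into the representation $u(x) - H(x) = \sum_{k=1}^N \Scal_{\Gamma_k}[\phi_k](x)$. For $|x|=r > r_1$, the formula \eqref{Single circular} gives $\Scal_{\Gamma_k}[e^{\mathrm{i}n\theta}](x) = -r_k^{n+1}/(2n\, r^n)\,e^{\mathrm{i}n\theta}$, so the factor of $2n$ in \eqref{PHINK1} cancels and I obtain
\[
u - H \;=\; -\sum_{n=1}^{\infty} a_n \frac{e^{\mathrm{i}n\theta}}{r^n}\, \bm{r}_n^T (\mathbb{E}_N^{(n)})^{-1} \bm{e},
\]
where $\bm{r}_n := (r_1^{n+1}, r_2^{n+1}, \ldots, r_N^{n+1})^T$.

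The crux of the argument is then the algebraic identity $\mathbb{E}_N^{(n)} = -\mathbb{M}_N^{(n)} D^{(n)}$ with $D^{(n)} := \mathrm{diag}(r_1^{1-n}, r_2^{1-n}, \ldots, r_N^{1-n})$. I would verify this entry-by-entry: for $j<k$ the $(k,j)$ entry of $-\mathbb{M}_N^{(n)} D^{(n)}$ equals $r_j^{1-n}$; for $j=k$ it equals $2\lambda_k r_k^{1-n}$; and for $j>k$ it equals $-(r_j/r_k)^{2n}\cdot r_j^{1-n} = -r_j^{n+1} r_k^{-2n}$, which matches $\mathbb{E}_N^{(n)}$ exactly. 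Since $\mathbb{E}_N^{(n)}$ is invertible (a consequence of the well-posedness noted in the preceding theorem) and $D^{(n)}$ is trivially invertible, this factorization immediately yields invertibility of $\mathbb{M}_N^{(n)}$ together with the relation $(\mathbb{E}_N^{(n)})^{-1} = -(D^{(n)})^{-1}(\mathbb{M}_N^{(n)})^{-1}$.

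Inserting this back into the displayed expression for $u-H$ and using $\bm{r}_n^T (D^{(n)})^{-1} = (r_1^{2n}, \ldots, r_N^{2n}) = \bm{e}^T \Upsilon_N^{(n)}$ directly converts the sum into \eqref{eq:purbmn01}. The one place requiring real care is the bookkeeping of powers of $r_k$ in the factorization $\mathbb{E}_N^{(n)} = -\mathbb{M}_N^{(n)} D^{(n)}$, because the sub- and super-diagonal blocks of $\mathbb{E}_N^{(n)}$ scale differently; once the right diagonal scaling is identified, the remainder of the proof is essentially a substitution and a one-line identification.
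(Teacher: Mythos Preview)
Your proposal is correct and follows essentially the same route as the paper. The paper writes the sum $\sum_k r_k^{n+1}\phi_k^n$ as $\bm{e}^T\mathbb{F}_N^{(n)}(\mathbb{E}_N^{(n)})^{-1}\bm{e}$ with $\mathbb{F}_N^{(n)}=\mathrm{diag}(r_1^{n+1},\ldots,r_N^{n+1})$ and then passes directly to $\bm{e}^T\Upsilon_N^{(n)}(\mathbb{M}_N^{(n)})^{-1}\bm{e}$ without spelling out the intermediate step; your explicit factorization $\mathbb{E}_N^{(n)}=-\mathbb{M}_N^{(n)}D^{(n)}$ with $D^{(n)}=(\Upsilon_N^{(n)})^{-1}\mathbb{F}_N^{(n)}$ is exactly the identity underlying that passage, and your entry-by-entry verification is accurate.
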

\begin{proof}[\bf Proof]
	It then follows from Theorem \ref{thm52}, \eqref{Single_circular} and \eqref{PHINK} that
	the perturbed electric potential $u-H$ 	in $A_l$, $l =0,1,\ldots,N$, can be given by
	\[
	\begin{aligned}
	u-H&=2\sum_{n=1}^{+\infty}na_{n}\sum_{k=1}^{N}\Scal_{\Gamma_k}[e^{\mathrm{i} n\theta}]\bm{e}_k^T\(\mathbb{E}^{(n)}_N\)^{-1}\bm{e}\\
	%&=2\sum_{n=1}^{+\infty}na_{n}\(-\sum_{k=1}^{l} \frac{1}{2n} \left(\frac{r^{n}}{r_k^{n+1}}\right)e^{\mathrm{i} n\theta}-\sum_{k=l+1}^{N} \frac{1}{2n} \left(\frac{r_k^{n+1}}{r^{n}}\right)e^{\mathrm{i} n\theta}\)\bm{e}_k^T\(\mathbb{E}^{(n)}_N\)^{-1}\bm{e}\\
	%&=-\sum_{n=1}^{+\infty}a_{n}\(\sum_{k=1}^{l} \left(\frac{r^{n}}{r_k^{n+1}}\right)e^{\mathrm{i} n\theta}+\sum_{k=l+1}^{N} \left(\frac{r_k^{n+1}}{r^{n}}\right)e^{\mathrm{i} n\theta}\)\bm{e}_k^T\(\mathbb{E}^{(n)}_N\)^{-1}\bm{e}\\
	&=-\sum_{n=1}^{+\infty}a_{n}\(r^{n}e^{\mathrm{i} n\theta}\sum_{k=1}^{l} \frac{1}{r_k^{n-1}}
	+\frac{e^{\mathrm{i} n\theta}}{r^{n}}\sum_{k=l+1}^{N} r_k^{n+1}
	\)\bm{e}_k^T\(\mathbb{E}^{(n)}_N\)^{-1}\bm{e}\\
	&=-\sum_{n=1}^{+\infty}a_{n}e^{\mathrm{i} n\theta}\(r^{n}\bm{e}_{1:l}^T
	+\frac{1}{r^{n}}\bm{e}_{l+1:N}^T
	\)\mathbb{F}^{(n)}_{N}\(\mathbb{E}^{(n)}_N\)^{-1}\bm{e}\\
	&=\sum_{n=1}^{+\infty}a_{n}e^{\mathrm{i} n\theta}\(r^{n}\bm{e}_{1:l}^T
	+\frac{1}{r^{n}}\bm{e}_{l+1:N}^T
	\)\Upsilon_{N,l}^{(n)}  (\mathbb M_{N}^{(n)})^{-1}\bm{e},
	\end{aligned}
	\]
	where
	\begin{equation}
	\mathbb{F}^{(n)}_{N,l}:= \begin{bmatrix}
	r_1^{-n+1}&\cdots & 0 & 0 &\cdots& 0 \\
	\vdots&\ddots & \vdots & \vdots &\ddots & \vdots\\
	0&\cdots & r_l^{-n+1} & 0 &\cdots& 0 \\
	0&\cdots & 0 & r_{l+1}^{n+1} &\cdots& 0\\
	\vdots & \ddots & \vdots & \vdots& \ddots & \vdots\\
	0&\cdots & 0 &
	0&\cdots& r_{N}^{n+1}\\
	\end{bmatrix}.
	\end{equation}
	The proof is complete.
%	 the perturbed electric potential $u-H$ outside the multi-layer concentric disk can be given by
%	\[
%	\begin{aligned}
%	u-H&=-\sum_{n=1}^{+\infty} \frac{e^{\mathrm{i} n\theta}}{2nr^n}\sum_{k=1}^{N} {r_k}^{n+1} \phi^n_k\\
%	&=-\sum_{n=1}^{+\infty}a_{n}\frac{e^{\mathrm{i} n\theta}}{r^{n}} \sum_{k=1}^{N} {r_k}^{n+1} \bm{e}_k^T\(\mathbb{E}^{(n)}_N\)^{-1}\bm{e}\\
%	&=-\bm{e}^T\sum_{n=1}^{+\infty}a_{n}\frac{e^{\mathrm{i} n\theta}}{r^{n}} \mathbb{F}^{(n)}_{N}\(\mathbb{E}^{(n)}_N\)^{-1}\bm{e}\\
%	&=\bm{e}^T\sum_{n=1}^{+\infty}a_{n}\frac{e^{\mathrm{i} n\theta}}{r^{n}} \Upsilon_{N}^{(n)}  (\mathbb M_{N}^{(n)})^{-1}\bm{e},
%	\end{aligned}
%	\]
%	where
%	\begin{equation}
%	\mathbb{F}^{(n)}_{N}:= \begin{bmatrix}
%	r_1^{n+1} & 0 & 0 &\cdots& 0 \\
%	0 & r_{2}^{n+1} & 0 &\cdots& 0 \\
%	0 & 0 & r_{3}^{n+1} &\cdots& 0 \\
%	\vdots & \vdots & \vdots & \ddots & \vdots\\
%	0 & 0 & 0 &\cdots & r_{N}^{n+1}
%	\end{bmatrix} .
%	\end{equation}
%	The proof is complete.
\end{proof}

\begin{rem}
When  $a_1\neq 0$, $a_n=0$ for $n=2,3,\ldots,$ and  $N=2$, the condition $u-H = 0$ in $A_0$ in \eqref{eq:purbmn01} leading to the cloaking of a two-layed concentric disk gives the Hashin-Shtrikman formula \cite{GWM2022}
\begin{equation}\label{HSF}
\sigma_0 = \sigma_1 + \frac{2\sigma_1f_1(\sigma_2-\sigma_1)}{2\sigma_1+f_2(\sigma_2-\sigma_1)},
\end{equation}
where $f_1 =1-f_2 = \frac{r_2^2}{r_1^2}$. This suggests that there may be an  effective conductivity $\sigma_0$ at which the current is neither attracted nor diverted around the inclusion but remains completely unperturbed in the exterior region, which is equivalent to the first order polarization tensors of the inclusion vanishing. In other words, inserting this two-layed concentric disk into the matrix would not disturb the uniform current outside the disk, and Hashin's neutral inclusion is a GPT-vanishing structure of order 1. The formula \eqref{eq:purbmn01} might provide  a new  perspective on the design of GPT-vanishing structures of $N-1$ order by using $N$-layer concentric disks.
\end{rem}
%\section{Unique recovery results}

\subsection{Uniqueness of the conductivity distribution of multi-layer concentric disks}
We shall consider the unique recovery of the conductivity distribution, i.e., the surfaces $r_k$ and the medium parameters $\sigma_k$, $k=1,2,\ldots,N$. To this end, let $A^{(j)}=\cup_{k=1}^NA_k^{(j)}$, $j=1,2$,  be two $N$-layer concentric disks, which satisfy \eqref{eq:aj} with $r_k$ replaced by $r_k^{(1)}$ and $r_k^{(2)}$, respectively. Correspondingly, the material parameter
$
\sigma_k
$, $k=1,2,\ldots,N$,
is replaced by
$
\sigma^{(1)}_k
$
and
$
\sigma^{(2)}_k,
$
respectively, for $A^{(1)}$ and $A^{(2)}$. Let $u_j$, $j=1, 2$, be the solutions to \eqref{eq:mainmd02} with $A$ replaced by $A^{(1)}$ and $A^{(2)}$, respectively. Denote by $\mathbb{M}_{N,1}^{(n)}$, $\mathbb{M}_{N,2}^{(n)}$ the  $n$-order \emph{GPM} for $A^{(1)}$ and $A^{(2)}$, respectively.

From \eqref{eq:purbmn01}, there holds the following for $x\in A_0$,
\[
u_j=H+\bm{e}^T\sum_{n=1}^{+\infty}a_{n}\frac{e^{\mathrm{i} n\theta}}{r^{n}}\Upsilon_{N,j}^{(n)} (\mathbb M_{N,j}^{(n)})^{-1}\bm{e}, \quad j=1,2.
\]
In order to obtain the uniqueness recovery of conductivity distribution, we shall study the row vector $\bm{e}^T \Upsilon_{N}^{(n)} (\mathbb M_{N}^{(n)})^{*} $ and the column vector $(\mathbb M_{N}^{(n)})^{*}\bm{e}$, where superscript $*$ denotes the adjugate of a matrix. In order to simplify the analysis, in our subsequent study, we
always assume that
$t^n_{i,j}=(r_{j}/r_{i})^{2n}$
\[
K_M^{i,j}(n):=
\begin{vmatrix}
t_{i,j}^n+1 & t_{i,j+1}^n+1 & \cdots& t_{i,M-1}^n+1 &t_{i,M}^n+2\Gl_{M} \\
-2\Gl_{j}+1 & t_{j,j+1}^n+1 &\cdots& t_{j,M-1}^n+1  & t_{j,M}^n+2\Gl_{M} \\
0 & -2\Gl_{j+1}+1 &\cdots& t_{j+1,M-1}^n+1  & t_{j+1,M}^n+2\Gl_{M} \\
\vdots & \vdots& \ddots & \vdots  & \vdots\\
0 & 0 & \cdots & -2\Gl_{M-1}+1&t_{M-1,M}^n+2\Gl_{M}
\end{vmatrix},
\]
and
\[
\begin{aligned}
L_{M}^{i,j}(n)&:= \begin{vmatrix}
r_i^{2n} & r_{j}^{2n} & r_{j+1}^{2n} & \cdots& r_{M-1}^{2n} &r_{M}^{2n} \\
-1-t^n_{j,i} & -2\Gl_{j}-1 & 0 &\cdots& 0  & 0 \\
-1-t^n_{j+1,i} & -1-t^n_{j+1,j} & -2\Gl_{j+1}-1 &\cdots& 0  & 0 \\
\vdots & \vdots & \vdots & \ddots & \vdots& \vdots\\
-1-t^n_{M-1,i} & -1-t^n_{M-1,j} & -1-t^n_{M-1,j+1} & \cdots & -2\Gl_{M-1}-1&0 \\
-1 & -1& -1 &\cdots & -1&-2\Gl_{M}
\end{vmatrix}\\
\end{aligned}
\]
where $i<j$ and set
\[
K_M^{i,M+1}=1,\;\mbox{ and }\;L_M^{i,M+1}=r_i^{2n}.
\]
By direct computations, one can derive the  recursion formulae for $K_M^{i,j}$ and $L_M^{i,j}$ in the following lemma, respectively.
\begin{lem}\label{recukl}
	There holds the following recursion formulae:
	\begin{equation}\label{recukij}
	K_M^{i,j}=\( t_{i,j}^n+1\)K_M^{j,j+1}-\(-2\lambda_{j}+1\)K_M^{i,j+1},
	\end{equation}
	and
	\begin{equation}\label{reculij}
	L_M^{i,j}=\(t^n_{j,i}+1\)L_M^{j,j+1}+\(-2\lambda_{j}-1\)L_M^{i,j+1}.
	\end{equation}
\end{lem}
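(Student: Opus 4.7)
The plan is to establish both recursions by a single Laplace (cofactor) expansion of each determinant along a row or column that contains only two nonzero entries, and then to identify each resulting minor with a smaller determinant from the same family. No analytical machinery is required; the content is entirely combinatorial, and the only delicate point is the bookkeeping of indices after deletion.

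For \eqref{recukij}, I would expand the determinant defining $K_M^{i,j}$ along its first column. By construction, that column carries $t_{i,j}^n+1$ in row $1$, $-2\lambda_j+1$ in row $2$, and $0$ in every remaining row. The cofactor expansion therefore gives
\[
K_M^{i,j}=(t_{i,j}^n+1)\,\Delta_{11}-(-2\lambda_j+1)\,\Delta_{21},
\]
where $\Delta_{11}$ and $\Delta_{21}$ are the minors obtained by deleting (row $1$, column $1$) and (row $2$, column $1$), respectively. A direct index check shows that $\Delta_{11}=K_M^{j,j+1}$, because after the deletion the surviving first row becomes $(t_{j,j+1}^n+1,\ldots,t_{j,M-1}^n+1,t_{j,M}^n+2\lambda_M)$ and the lower block keeps its triangular pattern; analogously, $\Delta_{21}=K_M^{i,j+1}$, because its first row becomes $(t_{i,j+1}^n+1,\ldots,t_{i,M-1}^n+1,t_{i,M}^n+2\lambda_M)$ and the tail block has leading entries $-2\lambda_{j+1}+1,-2\lambda_{j+2}+1,\ldots$ on its subdiagonal. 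Substituting yields \eqref{recukij}.

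For \eqref{reculij}, I would expand along the second row of the matrix defining $L_M^{i,j}$, whose only nonzero entries are $-1-t^n_{j,i}$ at position $(2,1)$ and $-2\lambda_j-1$ at position $(2,2)$. Combining the signs $(-1)^{2+1}$ and $(-1)^{2+2}$ with the entries gives
\[
L_M^{i,j}=(t^n_{j,i}+1)\,\widetilde\Delta_{21}+(-2\lambda_j-1)\,\widetilde\Delta_{22}.
\]
Deleting row $2$ and column $1$ transforms the first row into $(r_j^{2n},r_{j+1}^{2n},\ldots,r_M^{2n})$ and turns each first-column entry $-1-t^n_{k,i}$ of the surviving lower rows into $-1-t^n_{k,j}$; this is precisely the matrix for $L_M^{j,j+1}$, so $\widetilde\Delta_{21}=L_M^{j,j+1}$. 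Deleting row $2$ and column $2$ leaves a first row $(r_i^{2n},r_{j+1}^{2n},\ldots,r_M^{2n})$ with the shifted diagonal of $-2\lambda_k-1$ now starting at $k=j+1$, which matches the defining matrix for $L_M^{i,j+1}$, giving $\widetilde\Delta_{22}=L_M^{i,j+1}$ and hence \eqref{reculij}. The boundary conventions $K_M^{i,M+1}=1$ and $L_M^{i,M+1}=r_i^{2n}$ are consistent with both recursions in the degenerate case $j=M$ and will serve as base cases when these identities are iterated later. The only step requiring care is this index-matching after deletion, but it is transparent from the design of the matrices: the bordering row/column that isolates the parameter $i$ is exactly what the expansion strips off.
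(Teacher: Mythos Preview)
Your argument is correct and is precisely the ``direct computation'' the paper alludes to without spelling out: expand $K_M^{i,j}$ along its first column (only two nonzero entries) and $L_M^{i,j}$ along its second row (again only two nonzero entries), then match the resulting minors with $K_M^{j,j+1},K_M^{i,j+1}$ and $L_M^{j,j+1},L_M^{i,j+1}$ respectively. The index bookkeeping you record is exactly right, including the sign handling for the $L$-recursion.
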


Next, we give  the explicit formulae for  each element of the row vector $\bm{e}^T \Upsilon_{N}^{(n)} (\mathbb M_{N}^{(n)})^{*} $ and the column vector $(\mathbb M_{N}^{(n)})^{*}\bm{e}$.
\begin{lem}\label{gtf}
	The general term formulae for each element of the row vector $\bm{e}^T \Upsilon_{N}^{(n)} (\mathbb M_{N}^{(n)})^{*} $ and the column vector $(\mathbb M_{N}^{(n)})^{*}\bm{e}$ can be represented by
	\begin{equation}\label{pei}
	\((\mathbb M_{N}^{(n)})^{*}\bm{e}\)_i
	=(-1)^{N-i}\prod_{j=1}^{i-1}\(-2\lambda_j+1\)K_N^{i,i+1}(n),
	\end{equation}
	and
	\begin{equation}\label{erpi}
	\(\bm{e}^T \Upsilon_{N}^{(n)} (\mathbb M_{N}^{(n)})^{*}\)_i
	=\prod_{j=1}^{i-1}\(-2\lambda_j-1\)L_N^{i,i+1}(n),
	\end{equation}
	respectively, where $i=1,2,\ldots,N$.
\end{lem}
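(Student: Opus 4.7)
The proof strategy is to convert each quantity into a single determinant via Cramer's rule, apply row and column operations to expose a block-triangular structure yielding the claimed prefactors, and identify the residual blocks with $K_N^{i,i+1}(n)$ and $L_N^{i,i+1}(n)$ by invoking the recursions of Lemma~\ref{recukl}. Since $\mathbb{M}_N^{(n)}$ is invertible (Theorem~\ref{th:solmain01}), Cramer's rule gives
\[
\bigl((\mathbb{M}_N^{(n)})^*\bm{e}\bigr)_i=\det(\mathbf{B}_i),\qquad \bigl(\bm{e}^T\Upsilon_N^{(n)}(\mathbb{M}_N^{(n)})^*\bigr)_i=\det(\mathbf{R}_i),
\]
where $\mathbf{B}_i$ is obtained from $\mathbb{M}_N^{(n)}$ by replacing its $i$-th column with $\bm{e}$, and $\mathbf{R}_i$ by replacing its $i$-th row with $(r_1^{2n},\ldots,r_N^{2n})$.

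For \eqref{pei}, I would subtract row $i$ of $\mathbf{B}_i$ from every other row; its $i$-th column becomes $\bm{e}_i$, so expansion along that column produces an $(N-1)\times(N-1)$ block upper-triangular determinant. The top-left $(i-1)\times(i-1)$ block is upper triangular with diagonal $(1-2\lambda_k)_{k=1}^{i-1}$, contributing the prefactor $\prod_{k=1}^{i-1}(-2\lambda_k+1)$, while the bottom-right $(N-i)\times(N-i)$ block $\mathbf{C}_i$ has entries $-1-t_{i,j}^n$ below its diagonal, $-2\lambda_k-t_{i,k}^n$ on its diagonal, and $t_{k,j}^n-t_{i,j}^n$ above. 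Extracting $-1$ from each of its $N-i$ rows accounts for the sign $(-1)^{N-i}$, reducing \eqref{pei} to the identity $\det(-\mathbf{C}_i)=K_N^{i,i+1}(n)$, which I would prove by induction on $N-i$: the base case $N=i$ is trivial (both sides equal $1$), and the inductive step proceeds by cofactor expansion of $-\mathbf{C}_i$ along its first column, producing a two-term identity that matches the recursion \eqref{recukij} once the resulting submatrices are re-indexed to $K_N^{i+1,i+2}(n)$ and $K_N^{i,i+2}(n)$.

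For \eqref{erpi}, I would instead apply the column operations $\mathrm{col}_j\mapsto\mathrm{col}_j-t_{i,j}^n\,\mathrm{col}_i$ to $\mathbf{R}_i$ for every $j\neq i$. The identity $t_{i,j}^n\,t_{k,i}^n=t_{k,j}^n$ produces massive cancellations: rows $k<i$ become lower triangular with diagonal $-2\lambda_k-1$ and zero upper part, while row $i$ collapses to $r_i^{2n}\,\bm{e}_i^T$. Expansion along row $i$ contributes the factor $r_i^{2n}$, and the resulting block lower-triangular structure yields $\det(\mathbf{R}_i)=r_i^{2n}\prod_{k=1}^{i-1}(-2\lambda_k-1)\det(\mathbf{S})$ with $\mathbf{S}$ an explicit $(N-i)\times(N-i)$ bottom-right block. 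The residual identity $r_i^{2n}\det(\mathbf{S})=L_N^{i,i+1}(n)$ admits a direct verification: starting from $L_N^{i,i+1}(n)$, the row operations $\mathrm{row}_s\mapsto\mathrm{row}_s-(\mathrm{row}_s)_1\,r_i^{-2n}\,\mathrm{row}_1$ for $s\geq 2$ annihilate its first column below position $(1,1)$, and another invocation of $t_{k,i}^n\,t_{i,j}^n=t_{k,j}^n$ shows that the emerging bottom-right block coincides with $\mathbf{S}$, so that cofactor expansion along the first column gives exactly the claimed identity.

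The main obstacle is the inductive identification in \eqref{pei}: closing the induction requires verifying that the submatrices arising from cofactor expansion of $-\mathbf{C}_i$ coincide precisely with those defining $K_N^{i+1,i+2}(n)$ and $K_N^{i,i+2}(n)$, which demands careful bookkeeping of how the entries $t_{k,j}^n-t_{i,j}^n$ recombine when the top row and its associated $t_{i,\cdot}^n$ contributions are peeled off. Everything else is elementary, and once this alignment is established the two recursions of Lemma~\ref{recukl} together with the matching base cases close the argument.
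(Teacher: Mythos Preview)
Your overall strategy---Cramer's rule, then elementary operations to expose a block-triangular structure and factor out the prefactors---matches the paper's proof. Your treatment of \eqref{erpi} is correct and in fact more detailed than the paper's (which only says ``in a similar manner''); the column operation $\mathrm{col}_j\mapsto\mathrm{col}_j-t_{i,j}^n\,\mathrm{col}_i$ together with $t_{k,i}^n t_{i,j}^n=t_{k,j}^n$ does exactly what you claim.

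The gap is in your inductive identification for \eqref{pei}. After subtracting row $i$, the first column of $-\mathbf{C}_i$ reads $(2\lambda_{i+1}+t_{i,i+1}^n,\,1+t_{i,i+1}^n,\ldots,1+t_{i,i+1}^n)^T$, so cofactor expansion along it yields $N-i$ terms, not two. More seriously, the recursion \eqref{recukij} involves $K_N^{i,i+2}$, which keeps the first superscript $i$ fixed while advancing the second; this two-index family does not arise naturally from the one-index family $\{-\mathbf{C}_i\}$, so the ``re-indexing'' you anticipate cannot close the induction as written.

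The paper sidesteps this entirely by choosing a different elementary operation: it \emph{adds column $i$} (the all-ones column) to every other column of $\mathbf{B}_i$, which immediately zeros out all subdiagonal entries outside column $i$. After factoring the upper-left triangular block, one further subtracts the last row of the remaining $(N-i+1)\times(N-i+1)$ block from the rows above; expanding along the resulting first column then gives $(-1)^{N-i}K_N^{i,i+1}(n)$ \emph{by inspection}, with no induction and no appeal to Lemma~\ref{recukl}. You can repair your route just as easily: in $-\mathbf{C}_i$, subtract the \emph{last} row from all others and negate those $N-i-1$ rows; a cyclic row permutation then turns the matrix literally into the array defining $K_N^{i,i+1}(n)$, and the sign bookkeeping shows the determinant is unchanged. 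Either way, the point is that a direct identification is available and the recursion of Lemma~\ref{recukl} is not needed here.
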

\begin{proof}[\bf Proof]	
	%	We only prove the assertion in \eqref{pei}, and \eqref{erpi} can be proved in a similar manner.
	By using the Laplace expansion theorem for determinant, one can derive that  $\((\mathbb M_{N}^{(n)})^{*}\bm{e}\)_i$  is equal to the determinant after replacing the $i$-th column of the matrix $\mathbb M_{N}^{(n)}$  with the vector $\bm{e}$.
	With the help of this fact and some elementary transformation, we can obtain
	\[
	\begin{aligned}
	\((\mathbb M_{N}^{(n)})^{*}\bm{e}\)_{i}&= \begin{vmatrix}
	-2\Gl_{1} & t_{1,2}^n  & \cdots& t_{1,i-1}^n & 1 & t_{1,i+1}^n& \cdots & t_{1,N-1}^n &t_{1,N}^n \\
	-1 & -2\Gl_{2}  &\cdots& t_{2,i-1}^n& 1& t_{2,i+1}^n& \cdots& t_{2,N-1}^n  & t_{2,N}^n \\
	\vdots & \vdots  & \ddots & \vdots& \vdots& \vdots&\ddots& \vdots& \vdots\\
	-1 & -1  & \cdots & -2\lambda_{i-1}& 1& t_{i-1,i+1}^n&\cdots& t_{i-1,N-1}^n& t_{i-1,N}^n\\
	-1 & -1  & \cdots & -1& 1& t_{i,i+1}^n&\cdots& t_{i,N-1}^n& t_{i,N}^n\\
	-1 & -1  & \cdots & -1& 1& -2\lambda_{i+1}&\cdots& t_{i+1,N-1}^n& t_{i+1,N}^n\\
	\vdots & \vdots  & \ddots & \vdots& \vdots& \vdots&\ddots& \vdots& \vdots\\
	-1 & -1  & \cdots& -1 & 1& -1& \cdots& -2\lambda_{N-1}&t_{N-1,N}^n\\
	-1 & -1 &\cdots& -1 & 1& -1& \cdots& -1&-2\lambda_N
	\end{vmatrix}\\
	&= \begin{vmatrix}
	-2\Gl_{1}+1 & t_{1,2}^n+1  & \cdots& t_{1,i-1}^n+1 & 1 & t_{1,i+1}^n+1& \cdots & t_{1,N-1}^n +1&t_{1,N}^n \\
	0 & -2\Gl_{2}+1  &\cdots& t_{2,i-1}^n+1& 1& t_{2,i+1}^n+1& \cdots& t_{2,N-1}^n+1  & t_{2,N}^n \\
	\vdots & \vdots  & \ddots & \vdots& \vdots& \vdots&\ddots& \vdots& \vdots\\
	0 & 0  & \cdots & -2\lambda_{i-1}+1& 1& t_{i-1,i+1}^n+1&\cdots& t_{i-1,N-1}^n+1& t_{i-1,N}^n\\
	0 & 0  & \cdots & 0& 1& t_{i,i+1}^n+1&\cdots& t_{i,N-1}^n+1& t_{i,N}^n\\
	0 & 0  & \cdots & 0& 1& -2\lambda_{i+1}+1&\cdots& t_{i+1,N-1}^n+1& t_{i+1,N}^n\\
	\vdots & \vdots  & \ddots & \vdots& \vdots& \vdots&\ddots& \vdots& \vdots\\
	0 & 0  & \cdots& 0 & 1& 0& \cdots& -2\lambda_{N-1}+1&t_{N-1,N}^n\\
	0 & 0 &\cdots& 0 & 1& 0& \cdots& 0&-2\lambda_N
	\end{vmatrix}\\
	&=\prod_{j=1}^{i-1}\(-2\lambda_j+1\)\begin{vmatrix}
	0& t_{i,i+1}^n+1&\cdots& t_{i,N-1}^n+1& t_{i,N}^n+2\lambda_N\\
	0& -2\lambda_{i+1}+1&\cdots& t_{i+1,N-1}^n+1& t_{i+1,N}^n+2\lambda_N\\
	\vdots& \vdots&\ddots& \vdots& \vdots\\
	0& 0& \cdots& -2\lambda_{N-1}+1&t_{N-1,N}^n+2\lambda_N\\
	1& 0& \cdots& 0&-2\lambda_N
	\end{vmatrix}\\
	&=(-1)^{N-i}\prod_{j=1}^{i-1}\(-2\lambda_j+1\)K_N^{i,i+1}(n).
	\end{aligned}
	\]
	Note that $\(\bm{e}^T \Upsilon_{N}^{(n)} (\mathbb M_{N}^{(n)})^{*}\)_i$  is equal to the determinant after replacing the $i$-th row of the matrix $\mathbb M_{N}^{(n)}$  with the vector $\bm{e}^T \Upsilon_{N}^{(n)}$.
	In a similar manner, one can also derive that
	\[
	\(\bm{e}^T \Upsilon_{N}^{(n)} (\mathbb M_{N}^{(n)})^{*}\)_i
	=\prod_{j=1}^{i-1}\(-2\lambda_j-1\)L_N^{i,i+1}(n).
	\]
	The proof is complete.
\end{proof}
\begin{thm}\label{thm3.31}
	Let $u_j$ be the solution to \eqref{eq:mainmd02}, with $N$-layer concentric disks $A^{(j)}$, $j=1,2,$ respectively. Let $\Omega$ be a bounded domain enclosing $A^{(1)}\cup A^{(2)}$ and $H = \sum_{n=1}^\infty a_nr^ne^{\mathrm{i}n\theta}$, where $a_n\neq 0 $ for every sufficiently large $n$. If $u_1=u_2$ on $\Pi$, then
	\[
	r^{(1)}_k=r^{(2)}_k \mbox{ and } \sigma^{(1)}_k=\sigma^{(2)}_k,\; k=1,2,\ldots,N,
	\]
	where $\Pi$ is an open subset of $\p \Omega$.
\end{thm}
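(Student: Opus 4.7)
The plan is to combine unique continuation with the exterior expansion of Theorem \ref{th:solmain01} and then read off the radii one layer at a time from the asymptotics as $n\to\infty$.

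First, since $u_1-u_2$ is harmonic in $\RR^2\setminus\overline{\Omega}$ and vanishes on the open set $\Pi\subset\partial\Omega$, unique continuation gives $u_1\equiv u_2$ on $\RR^2\setminus\overline{\Omega}$, and hence on the common exterior domain $\{|x|>\max(r_1^{(1)},r_1^{(2)})\}$ where both series \eqref{eq:purbmn01} converge. Since the multipolar coefficients are unique, for every $n$ with $a_n\neq 0$ we obtain
\[
c_n^{(1)} = c_n^{(2)}, \qquad c_n^{(j)} := \bm{e}^T\Upsilon_{N,j}^{(n)}\bigl(\mathbb M_{N,j}^{(n)}\bigr)^{-1}\bm{e} = \sum_{i=1}^N r_i^{(j)\,2n}\bigl[(\mathbb M_{N,j}^{(n)})^{-1}\bm{e}\bigr]_i.
\]
The hypothesis ``for $n$ large enough'' supplies an infinite sequence of such $n$ tending to infinity along which this identity holds.

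Next I would extract $r_1$ from the dominant asymptotics. Because $r_1^{(j)}>r_2^{(j)}>\cdots>r_N^{(j)}>0$, every super-diagonal entry $(r_\ell/r_k)^{2n}$ of $\mathbb M_{N,j}^{(n)}$ is exponentially small, so the matrix converges to the lower-triangular limit with diagonal $\operatorname{diag}(-2\lambda_1^{(j)},\ldots,-2\lambda_N^{(j)})$. Back-substitution gives $[(\mathbb M_{N,j}^{(\infty)})^{-1}\bm{e}]_1 = -1/(2\lambda_1^{(j)})\neq 0$, whence
\[
c_n^{(j)} = -\frac{r_1^{(j)\,2n}}{2\lambda_1^{(j)}}\bigl(1+o(1)\bigr),\qquad n\to\infty.
\]
Taking logs (or ratios) in $c_n^{(1)}=c_n^{(2)}$ along the infinite sequence forces $r_1^{(1)} = r_1^{(2)}$; equating prefactors then also yields $\lambda_1^{(1)} = \lambda_1^{(2)}$.

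With $r_1$ and $\lambda_1$ identified, I would iterate by peeling off the outermost contribution. Subtracting $-r_1^{(j)\,2n}/(2\lambda_1^{(j)})$ from $c_n^{(j)}$, the residual $\tilde c_n^{(j)}$ still satisfies $\tilde c_n^{(1)}=\tilde c_n^{(2)}$. Using the cofactor identities of Lemma \ref{gtf} together with the recursions in Lemma \ref{recukl}, one sees that the next-largest scale in $\tilde c_n^{(j)}$ is $r_2^{(j)\,2n}$, with a nonzero effective prefactor depending only on $\lambda_1^{(j)}$ and $\lambda_2^{(j)}$. The same growth-rate comparison then forces $r_2^{(1)}=r_2^{(2)}$; iterating $N-1$ more times yields $r_k^{(1)}=r_k^{(2)}$ for every $k=1,\ldots,N$.

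The main obstacle will be controlling the peeling rigorously. When the $r_1$-contribution is subtracted, corrections to $[(\mathbb M_{N,j}^{(n)})^{-1}\bm{e}]_1$ of order $(r_2^{(j)}/r_1^{(j)})^{2n}$ mix with the genuine $r_2^{(j)\,2n}$ term, so at the $k$-th step all contributions of a given exponential scale must be collected and shown to sum to a nonzero multiple of $r_k^{(j)\,2n}$. The recursions \eqref{recukij}--\eqref{reculij} give the bookkeeping needed to expand the inverse matrix systematically in powers of the small parameters $(r_\ell/r_k)^{2n}$, and nonvanishing of the effective coefficient at each step is guaranteed by the invertibility of $\mathbb M_{N,j}^{(n)}$ stated in Theorem \ref{th:solmain01}.
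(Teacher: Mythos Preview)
Your overall strategy---unique continuation, then extracting the radii one at a time from the $n\to\infty$ asymptotics of the scalar $c_n^{(j)}=\bm e^T\Upsilon_{N,j}^{(n)}(\mathbb M_{N,j}^{(n)})^{-1}\bm e$---is exactly the paper's. The difference is in how the iteration is organized. You propose subtracting the approximate leading term $-r_1^{2n}/(2\lambda_1)$ and then tracking residuals; as you correctly flag, this is delicate because the $n$-dependent corrections to $[(\mathbb M_{N,j}^{(n)})^{-1}\bm e]_1$ live at the same exponential scale as the genuine $r_2^{2n}$ contribution. The paper sidesteps this entirely by an exact algebraic cancellation rather than an asymptotic one. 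Once $r_1^{(1)}=r_1^{(2)}=:r_1$, it divides $c_n^{(1)}=c_n^{(2)}$ by $(r_2^{(1)})^{2n}$, writes the result as $\bm v_1(\mathbb M_{N,1}^{(n)})^{-1}\bm e=\bm v_2(\mathbb M_{N,2}^{(n)})^{-1}\bm e$ where $\bm v_1,\bm v_2$ now share the \emph{same} first entry $(r_1/r_2^{(1)})^{2n}$, and rearranges to
\[
(\bm v_1-\bm v_2)(\mathbb M_{N,1}^{(n)})^{-1}\bm e \;=\; \bm v_2\bigl[(\mathbb M_{N,2}^{(n)})^{-1}-(\mathbb M_{N,1}^{(n)})^{-1}\bigr]\bm e.
\]
The first entry of $\bm v_1-\bm v_2$ is exactly zero, so the left side converges to the finite nonzero number $[(\mathbb M_{N,1})^{-1}\bm e]_2$ (evaluated via the cofactor formula \eqref{pei}), while on the right the blowing-up factor $(r_1/r_2^{(1)})^{2n}$ hits the first entry of the matrix difference, forcing a contradiction. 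This trick iterates cleanly through all $k$ without any peeling bookkeeping, and \eqref{pei} supplies the nonvanishing of $[(\mathbb M_{N,1})^{-1}\bm e]_k$ at each step. Your route would ultimately work, but the paper's rearrangement removes precisely the obstacle you anticipated.
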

\begin{proof}[\bf Proof]
Since $u_1=u_2$ on $\Pi$, by using unique continuation, it is easy to see that $u_1=u_2$ in $\RR^2\setminus (A^{(1)}\cup A^{(2)})$. Then by applying Theorem \ref{th:02}, the coincidence of the locations of $N$-layer concentric disks can be obtained.
Without loss of generality, assume that $r_1^{(1)}>r_1^{(2)}$.
It follows from \eqref{eq:purbmn01} that for $r\geqslant r_1^{(1)}$,
\begin{equation}\label{eq:BM}
\begin{aligned}
& \left(\frac{(r_1^{(1)})^{2n}}{r^n},\frac{(r_2^{(1)})^{2n}}{r^n},\frac{(r_3^{(1)})^{2n}}{r^n},\ldots,\frac{(r_N^{(1)})^{2n}}{r^n}\right) (\mathbb M_{N,1}^{(n)})^{-1}\bm{e}\\
=&\left(\frac{(r_1^{(2)})^{2n}}{r^n},\frac{(r_2^{(2)})^{2n}}{r^n},\frac{(r_3^{(2)})^{2n}}{r^n},\ldots,\frac{(r_N^{(2)})^{2n}}{r^n}\right) (\mathbb M_{N,2}^{(n)})^{-1}\bm{e}.
\end{aligned}
\end{equation}
Taking $r = r_1^{(1)}$ and dividing $(r_1^{(1)})^{n}$ on the both sides of the above equality, one has that
\begin{equation}\label{DR112n}
\begin{aligned}
& \left(1,\frac{(r_2^{(1)})^{2n}}{(r_1^{(1)})^{2n}},\frac{(r_3^{(1)})^{2n}}{(r_1^{(1)})^{2n}},\ldots,\frac{(r_N^{(1)})^{2n}}{(r_1^{(1)})^{2n}}\right) (\mathbb M_{N,1}^{(n)})^{-1}\bm{e}\\
=&\left(\frac{(r_1^{(2)})^{2n}}{(r_1^{(1)})^{2n}},\frac{(r_2^{(2)})^{2n}}{(r_1^{(1)})^{2n}},\frac{(r_3^{(2)})^{2n}}{(r_1^{(1)})^{2n}},\ldots,\frac{(r_N^{(2)})^{2n}}{(r_1^{(1)})^{2n}}\right) (\mathbb M_{N,2}^{(n)})^{-1}\bm{e}.
\end{aligned}
\end{equation}
Note that
\[
\lim_{n\to\infty} \mathbb{M}_N^{(n)} =  \mathbb{M}_N := \begin{bmatrix}
-2\Gl_1 & 0 & 0 & \cdots& 0 &0 \\
-1 & -2\Gl_{2} & 0 &\cdots& 0  & 0 \\
-1 & -1 & -2\Gl_{3} &\cdots& 0  & 0 \\
\vdots & \vdots & \vdots & \ddots & \vdots& \vdots\\
-1 & -1 & -1 & \cdots & -2\Gl_{N-1}&0 \\
-1 & -1& -1 &\cdots & -1&-2\Gl_{N}
\end{bmatrix}.
\]
It follows from \eqref{DR112n} that, for $n$ large enough
\[
\left(1,0,0,\ldots,0\right) (\mathbb M_{N,1})^{-1}\bm{e}=0,
\]
which implies that $\left((\mathbb M_{N,1})^{*}\bm{e}\right)_1=0$. On the other hand, from \eqref{pei}, we have
\[
 \left((\mathbb M_{N,1})^{*}\bm{e}\right)_1 =(-2)^{N-1} \prod_{i=2}^{N}\lambda_{i}^{(1)} =\prod_{i=2}^{N} \frac{\sigma_{i}^{(1)}+\sigma_{i-1}^{(1)}}{(\sigma_{i-1}^{(1)}-\sigma_{i}^{(1)})}\neq 0,
\]
which is a contradiction. Hence
\[
r_1:=r_1^{(1)}=r_1^{(2)}.
\]
Using  \eqref{DR112n} again, we can obtain that
	\begin{equation}\label{sigma1}
	\frac{1}{-2\Gl_1^{(1)}}=\left((\mathbb M_{N,1})^{-1}\bm{e}\right)_1 = \lim_{n\to\infty}\left((\mathbb M^{(n)}_{N,1})^{-1}\bm{e}\right)_1 = \lim_{n\to\infty}\left((\mathbb M^{(n)}_{N,2})^{-1}\bm{e}\right)_1 =\left((\mathbb M_{N,2})^{-1}\bm{e}\right)_1 = \frac{1}{-2\Gl_1^{(2)}},
	\end{equation}
thus, $\Gl_1:=\Gl_1^{(1)} = \Gl_1^{(2)}$, i.e.,
\[
\sigma^{(1)}_1=\sigma^{(2)}_1.
\]
We next assume that $r_2^{(1)}>r_2^{(2)}$.
It follows from unique continuation and \eqref{eq:purbmn01} that for $r_2^{(1)} \leqslant r< r_1^{(1)}$, 
\begin{equation}\label{eq:BM2}
\begin{aligned}
& \left({r^n},\frac{(r_2^{(1)})^{2n}}{r^n},\frac{(r_3^{(1)})^{2n}}{r^n},\ldots,\frac{(r_N^{(1)})^{2n}}{r^n}\right) (\mathbb M_{N,1}^{(n)})^{-1}\bm{e}\\
=&\left({r^n},\frac{(r_2^{(2)})^{2n}}{r^n},\frac{(r_3^{(2)})^{2n}}{r^n},\ldots,\frac{(r_N^{(2)})^{2n}}{r^n}\right) (\mathbb M_{N,2}^{(n)})^{-1}\bm{e}.
\end{aligned}
\end{equation}
Taking $r = r_2^{(1)}$ and dividing $(r_2^{(1)})^{n}$ on the both sides of \eqref{eq:BM2}, one has that
\begin{equation}\label{eq:BM2r21}
\begin{aligned}
& \left(1,1,\frac{(r_3^{(1)})^{2n}}{(r_2^{(1)})^{2n}},\ldots,\frac{(r_N^{(1)})^{2n}}{(r_2^{(1)})^{2n}}\right) (\mathbb M_{N,1}^{(n)})^{-1}\bm{e}\\
=&\left(1,\frac{(r_2^{(2)})^{2n}}{(r_2^{(1)})^{2n}},\frac{(r_3^{(2)})^{2n}}{(r_2^{(1)})^{2n}},\ldots,\frac{(r_N^{(2)})^{2n}}{(r_2^{(1)})^{2n}}\right) (\mathbb M_{N,2}^{(n)})^{-1}\bm{e}.
\end{aligned}
\end{equation}
It follows from \eqref{eq:BM2r21} that for  $n$ large enough
\[
\left((\mathbb M_{N,1})^{-1}\bm{e}\right)_1+\left((\mathbb M_{N,1})^{-1}\bm{e}\right)_2=\left((\mathbb M_{N,2})^{-1}\bm{e}\right)_1,
\]
this, together with \eqref{sigma1}, implies that $\left((\mathbb M_{N,1})^{-1}\bm{e}\right)_2 =0$. However,
\[
\left((\mathbb M_{N,1})^{*}\bm{e}\right)_2 = (-2)^{N-2} (-2\lambda_{1}+1) \prod_{i=3}^{N}\lambda_{i}^{(1)}\neq 0.
\]
This is a contradiction. Hence
\[
r_2:=r_2^{(1)}=r_2^{(2)}.
\]
Using  \eqref{eq:BM2r21} again, we can obtain that
\[
\frac{-2\lambda_{1}+1}{4\Gl_1 \Gl_2^{(1)}}=\left((\mathbb M_{N,1})^{-1}\bm{e}\right)_2  =\left((\mathbb M_{N,2})^{-1}\bm{e}\right)_2 = \frac{-2\lambda_{1}+1}{4\Gl_1 \Gl_2^{(2)}},
\]
thus, $\Gl_2:=\Gl_2^{(1)} = \Gl_2^{(2)}$, i.e.,
\[
\sigma^{(1)}_2=\sigma^{(2)}_2.
\]
Analogously, since
\[
\left((\mathbb M_{N})^{*}\bm{e}\right)_k = (-2)^{N-k}\prod_{i=1}^{k-1} (-2\lambda_{i}+1) \prod_{i=k+1}^{N}\lambda_{i}\neq 0,
\]
we can conclude that
\[
r_k^{(1)}=r_k^{(2)} \mbox{ and } \sigma^{(1)}_k=\sigma^{(2)}_k,\quad k=3,4,\ldots,N.
\]
The proof is complete.
\end{proof}
\begin{rem}
 It is known that the $N$-layer concentric disks can be achieved as GPT-vanishing structure of $N-1$ order (see \cite{AKLL11}).
Theorem \ref{thm3.31} shows that  the conductivity distribution of the multi-layer concentric disks can be uniquely recovered under high-order probing wave.
Indeed, this is also physically justifiable.
\end{rem}
\begin{rem}
We remark that Theorem \ref{thm3.31} may be generalized to prove the uniqueness of layers $N$, although we fixed it in the theorem. Indeed, if we suppose $A^{(1)}=\cup_{k=1}^{N_1}A_k^{(1)}$ and $A^{(2)}=\cup_{k=1}^{N_2}A_k^{(2)}$, and without loss of generality, assume that $N_1>N_2$.
It follows from \eqref{eq:purbmn01} that
\[
\begin{aligned}
& \left(\frac{(r_1^{(1)})^{2n}}{r^n},\frac{(r_2^{(1)})^{2n}}{r^n},\ldots,\frac{(r_{N_2}^{(1)})^{2n}}{r^n},\frac{(r_{N_2+1}^{(1)})^{2n}}{r^n},\ldots,\frac{(r_{N_1}^{(1)})^{2n}}{r^n}\right) (\mathbb M_{N_1,1}^{(n)})^{-1}\bm{e}_{N_1\times 1}\\
=&\left(\frac{(r_1^{(2)})^{2n}}{r^n},\frac{(r_2^{(2)})^{2n}}{r^n},\ldots,\frac{(r_{N_2}^{(2)})^{2n}}{r^n}\right) (\mathbb M_{N_2,2}^{(n)})^{-1}\bm{e}_{N_2\times 1}.
\end{aligned}
\]
which implies that
\[
\begin{aligned}
& \left(\frac{(r_1^{(1)})^{2n}}{r^n},\frac{(r_2^{(1)})^{2n}}{r^n},\ldots,\frac{(r_{N_2}^{(1)})^{2n}}{r^n},\frac{(r_{N_2+1}^{(1)})^{2n}}{r^n},\ldots,\frac{(r_{N_1}^{(1)})^{2n}}{r^n}\right) (\mathbb M_{N_1,1}^{(n)})^{-1}\bm{e}_{N_1\times 1}\\
=&\left(\frac{(r_1^{(2)})^{2n}}{r^n},\frac{(r_2^{(2)})^{2n}}{r^n},\ldots,\frac{(r_{N_2}^{(2)})^{2n}}{r^n},0,\ldots,0\right)_{1\times N_1} (\widetilde{\mathbb{M}}_{N_1,2}^{(n)})^{-1}((\bm{e}_{1\times N_2 }, 0,\ldots,0)_{1\times N_1})^T,
\end{aligned}
\]
where the matrix $\widetilde{\mathbb{M}}_{N_1,2}^{(n)} $ is formed by replacing the first $N_2$ rows and $N_2$ columns in  identity matrix  ${\mathbb{I}}_{N_1}$ with ${\mathbb{M}}_{N_2,2}^{(n)}$.
Using the similar strategy as in Theorem \ref{thm3.31}, one can show that
\[
r_k^{(1)}=r_k^{(2)} \mbox{ and } \sigma^{(1)}_k=\sigma^{(2)}_k,\quad k=1,2,\ldots,N_2\;\mbox{ and }r_k^{(1)}=0,\quad k=N_2+1,\ldots,N_1,
\]
which means that $N_1 = N_2$.
\end{rem}

\subsection{Uniqueness of the conductivity value for multi-layer concentric disks}
By Theorem \ref{thm3.31}, we see that the conductivity distribution of the multi-layer concentric disks can be uniquely recovered  under high-order probing wave.  Next, we shall give the unique recovery of the material information, (i.e., $\sigma_{k}, k=1,2,\ldots,N$) with the known geometric information under some low-order probing waves. 
To this end, we first introduce the following notation.  Let $M> N$.
We denote by $C^{N}_{M}$ the set of all combinations of $N$ out $M$,  say e.g., for one combination
\[
(i_1, i_2, \ldots, i_{N})\in C^{N}_{M}\quad \mbox{satisfying} \quad
1\leqslant i_1<i_2<\cdots< i_{N}\leqslant M.
\]
In what follows, we also need the following assumption.
\begin{asm}\label{asm51}
Assume that there exists the partial-order background electrical potential
\begin{equation}\label{equ530}
	H = \sum_{k=1}^{N} a_{i_k}r^{i_k} e^{\mathrm{i} i_k\theta},
\end{equation}
	where $(i_1, i_2, \ldots, i_{N})\in C^{N}_{M}$, such that the matrix $\mathbb{T}_N$ is invertible, where $\mathbb{T}_N := \((\mathbb{L}_{N,1}))_{k,l}(\mathbb{R}_{N,2})_{l,k}\)_{k,l=1}^N$,
\begin{equation}\label{MNN}
\mathbb{L}_{N,1}:=\begin{bmatrix}
\bm{e}^T \Upsilon_{N}^{(i_1)} (\mathbb M_{N,1}^{(i_1)})^{*}\\ \bm{e}^T \Upsilon_{N}^{(i_2)} (\mathbb M_{N,1}^{(i_2)})^{*} \\\vdots\\\bm{e}^T \Upsilon_{N}^{(i_N)} (\mathbb M_{N,1}^{(i_N)})^{*}
\end{bmatrix},\;\mbox{ and  }\;
\mathbb{R}_{N,2}:=\begin{bmatrix}
(\mathbb M_{N,2}^{(i_1)})^{*}\bm{e} & (\mathbb M_{N,2}^{(i_2)})^{*}\bm{e}  &\cdots& (\mathbb M_{N,2}^{(i_N)})^{*}\bm{e}
\end{bmatrix}.
\end{equation}
\end{asm}
\begin{thm}\label{thm3.3}
Let $u_j$ be the solution to \eqref{eq:mainmd02}, with conductivity  ${\sigma}_k^{(j)}$, $j=1,2,$ respectively. Let $\Omega$ be a bounded domain enclosing $A=\cup_{k=1}^{N}A_k$, i.e., $A\subset \Omega$, and let  $H$ be given by \eqref{equ530}.  If  Assumption \ref{asm51} and  $u_1=u_2$ on $\Pi$ hold, then
\[
 \sigma^{(1)}_k=\sigma^{(2)}_k,\; k=1,2,\ldots,N,
\]
where $\Pi$ is an open subset of $\p \Omega$.
\end{thm}

\begin{proof}[\bf Proof]
	Since $u_1=u_2$ on $\Pi$, by using unique continuation, it is easy to see that $u_1=u_2$ in ${A}_0$.
	It follows from \eqref{eq:purbmn01} and Assumption \ref{asm51}  that
	\[
	\bm{e}^T\Upsilon_{N}^{(n)} (\mathbb M_{N,1}^{(n)})^{-1}\bm{e} = \bm{e}^T\Upsilon_{N}^{(n)} (\mathbb M_{N,2}^{(n)})^{-1}\bm{e}, \mbox{ for } n = i_1,i_2,\ldots,i_N,
	\]
	which implies that
	\begin{equation}\label{u1=u2}
	\bm{e}^T \Upsilon_{N}^{(n)} (\mathbb M_{N,1}^{(n)})^{*}  \(\mathbb M_{N,2}^{(n)}-\mathbb M_{N,1}^{(n)}\)(\mathbb M_{N,2}^{(n)})^{*}\bm{e}=0, \mbox{ for } n = i_1,i_2,\ldots,i_N.
	\end{equation}
	Then by applying Theorem \ref{thm3.31}, we can obtain
	\[
	\mathbb M_{N,2}^{(n)}-\mathbb M_{N,1}^{(n)}=
	\begin{bmatrix}
	2\lambda^{(1)}_{1}-2\lambda^{(2)}_{1} & 0 & \cdots & 0 \\
	0 &2\lambda^{(1)}_{2}-2\lambda^{(2)}_{2}  & \cdots & 0\\
	\vdots & \vdots &\ddots &\vdots \\
	0 & 0 & \cdots & 2\lambda^{(1)}_{N}-2\lambda^{(2)}_{N}
	\end{bmatrix},
	\]
	which implies that $\mathbb M_{N,2}^{(n)}-\mathbb M_{N,1}^{(n)}$  is  independent of  the choice of $n$.
	Note that \eqref{u1=u2}  can be rewritten as
	\[
	\mathbb{T}_N \(2\lambda^{(1)}_{1}-2\lambda^{(2)}_{1},2\lambda^{(1)}_{2}-2\lambda^{(2)}_{2},\ldots,2\lambda^{(1)}_{N}-2\lambda^{(2)}_{N}\)^T = 0.
	\]
	Since the matrix $\mathbb{T}_N$ is invertible, we get
	\[
	\lambda^{(1)}_{k}=\lambda^{(2)}_{k},\; k=1,2,\ldots,N.
	\]
    In view of \eqref{lamdk}, we have
	\[
	\sigma^{(1)}_k=\sigma^{(2)}_k,\; k=1,2,\ldots,N.
	\]
	The proof is complete.
\end{proof}

\begin{rem}
Next we want to show that the restriction on the invertibility of the matrix $\mathbb{T}_N$ is not difficult to achieve, we shall present an example in what follows.
In view of \eqref{lamdk}, we have that
\[
\lambda^{(j)}_k\in (-\infty,-1/2)\cup(1/2,+\infty),\;k = 1,2,\ldots, N, \;j = 1,2.
\]
For two-layer structure, by taking $(i_1,i_2)=(1,2)$ in \eqref{MNN},  and by using Lemmas \ref{recukl}--\ref{gtf}, we have that
\[
\begin{aligned}
\mathbb{L}_{2,1}=\begin{bmatrix}
-2\lambda^{(1)} _{2} r_{1}^2+r_{2}^2 & -2\lambda^{(1)} _{1} r_{2}^2-r_{2}^2\\ -2\lambda^{(1)} _{2} r_{1}^4+r_{2}^4 & -2\lambda^{(1)} _{1} r_{2}^4-r_{2}^4 \end{bmatrix},
\end{aligned}
\]
and
\[
\begin{aligned}
\mathbb{R}_{2,2}= \begin{bmatrix} -2\lambda^{(2)} _{2}-t_{1,2} & -2\lambda^{(2)} _{2}-t^2_{1,2}\\ 1-2\lambda^{(2)} _{1} & 1-2\lambda^{(2)} _{1} \end{bmatrix}.
\end{aligned}
\]
By the definition of $\mathbb{T}_N$ and direct computations, we have that
\[
|\mathbb{T}_2| = -\frac{r_{2}^2 \left(2\lambda^{(1)} _{1}+1\right) \left(2\lambda^{(2)} _{1}-1\right) \left(4\lambda^{(1)} _{2}\lambda^{(2)} _{2}r_{1}^6+r_{2}^6\right) \left(r_{1}^2-r_{2}^2\right)}{r_{1}^4}\neq  0,
\]
which implies that the matrix $\mathbb{T}_2$ is invertible.
\end{rem}

\section{Concluding remarks}\label{s5}
In this paper, we derived the asymptotic expansions for the electric potential field in presence of a multi-layer structure. We also showed some properties of the induced GPTs. When the multi-layer structure satisfies the symmetry property, we derived the exact formulation of the GPTs, which is reduced to the so-called Generalized Polarization Matrix. With the help of such formulation, we were able to show the unique recovery results for both the structures and the conductivities by using only one \emph{partial order} measurement. Stability and numerical implementations for reconstructing such multi-layer structure will be our forth coming works.

\section*{Acknowledgement}
The authors wish to thank the anonymous referees for the constructive and insightful comments and suggestions, which have led to significant improvements on the presentation and results of this paper.
The work of Y. Deng was supported by NSFC-RGC Joint Research Grant No. 12161160314.

\section*{Data availability statement}
All data generated or analysed during this study are included in this published article.

\end{document}